\numberwithin{equation}{section}
\theoremstyle{definition}
\newtheorem{definition}{Definition}[section]
\newtheorem{question}[definition]{Question}
\theoremstyle{remark}
\newtheorem{remark}[definition]{Remark}
\theoremstyle{plain}
\newtheorem{theorem}[definition]{Theorem}
\newtheorem{result}[definition]{Result}
\newtheorem{lemma}[definition]{Lemma}
\newtheorem{example}[definition]{Example}
\newtheorem{corollary}[definition]{Corollary}
\definecolor{DPurple}{rgb}{0.46,0.2,0.69}
\newcommand{\F}{\boldsymbol{\rm F}}
\newcommand{\J}{\boldsymbol{\rm J}}
\newcommand{\projspace}{\mathbb{C}\mathbb{P}^k}
\newcommand{\B}{\mathscr{B}}
\newcommand{\f}{F}
\newcommand{\comple}{\B_{\mu}}
\newcommand\decosum[1]{\sum\nolimits_{{#1}}^{\bullet}}
\newcommand{\supp}{{\rm supp}}
\newcommand{\ind}{{\rm I_2}}
\newcommand{\fr}{\boldsymbol{{\sf f}}}
\newcommand{\D}{D}
\newcommand{\sph}{\widehat{\mathbb{C}}}
\newcommand{\C}{\mathbb{C}} 
\newcommand{\R}{\mathbb{R}}
\newcommand{\Z}{\mathbb{Z}}
\newcommand{\N}{\mathbb{N}}
\begin{document}

\title[Recurrence: meromorphic correspondences \& holomorphic semigroups]{Recurrence 
in the dynamics of meromorphic \\ correspondences and holomorphic semigroups}

\author{Mayuresh Londhe}
\address{Department of Mathematics, Indian Institute of Science, Bangalore 560012, India}
\email{mayureshl@iisc.ac.in}

\begin{abstract}
This paper studies recurrence phenomena in iterative holomorphic dynamics of certain multi-valued maps.
In particular, we prove an analogue of the Poincar{\'e} recurrence theorem for meromorphic 
correspondences with respect to certain dynamically interesting measures associated with them.
Meromorphic correspondences present a significant measure-theoretic obstacle:
the image of a Borel set under a meromorphic correspondence need not be Borel.
We manage this issue using the Measurable Projection Theorem, which is an aspect of descriptive set theory.
We also prove a result concerning invariance properties of the supports of the measures mentioned. 
\end{abstract}

\keywords{Meromorphic correspondences, recurrence, invariant measure, pluripotential theory}
\subjclass[2010]{Primary: 32H50, 37B20; Secondary: 37F05, 37A05}

\maketitle

\vspace{-0.25cm}
\section{Introduction and statement of main results}\label{S:intro}

This paper is devoted to the study of recurrence phenomena in iterative holomorphic dynamics beyond the
classical framework of maps. The best-known result on recurrence is the Poincar{\'e} recurrence theorem,
which says, in essence, that for certain self-maps of a probability space, the orbit of a \emph{typical} point of a
measurable subset visits this subset infinitely often. This paper explores how and when this phenomenon,
suitably interpreted, arises for a large class of correspondences.
\smallskip

We begin with a natural situation, in the holomorphic setting, in which the above phenomenon
might be explored. A \emph{rational semigroup} is a semigroup consisting of non-constant rational maps on $\sph$ with
function composition as the semigroup operation. 
Most finitely generated rational semigroups happen to admit
a probability measure that is associated with the semigroup action on $\sph$. The first such construction was
by Boyd \cite{boyd:imfgrs99}. Given such a semigroup and a generating set 
$\mathcal G=\{f_1, \dots, f_N\}$, any \textbf{word} $g$ of the form $g= f_{i_n} \circ \dots \circ f_{i_1}$ is said
to be of \emph{length} $n$, denoted $l(g)=n$. Boyd showed \cite[Theorem~1]{boyd:imfgrs99}
that for a finitely generated semigroup $S$ where $\deg(f)\geq 2$
for every $f\in S$, given any generating set $\mathcal G=\{f_1, \dots, f_N\}$, there exists a Borel probability
measure $\mu_{\mathcal G}$ such that for each $a\in \sph\setminus E(S)$  
\begin{equation}\label{E:boyd}
  \frac{1}{d_{{\mathcal G}}^n} \sum\nolimits_{\substack{g(z)=a \\ l(g)=n}}\!\delta_z \to \mu_{\mathcal G} \; \;
  \text{ as } n \to \infty
\end{equation}
in the weak* topology. Here, $d_{{\mathcal G}}:=\deg(f_1) + \dots + \deg(f_N)$ and $E(S)$ is a set with at
most two points that is independent of $\mathcal G$. 
\smallskip 

The measure constructed above generalizes the measure $\mu_{f}$ constructed by
Freire--Lopes--Ma{\~n}{\'e} \cite{FLM:imrm83},
Lyubich \cite{ljubich:eprers83} for $S=\langle f \rangle$, $\deg(f)\geq 2$. The latter measure is
invariant in the classical sense: i.e., $\mu_{f}(f^{-1}(B))= \mu_{f}(B)$ for every 
Borel subset $B$ of $\sph$. However, in the case of semigroups$\,\neq \langle f \rangle$, the measure
$\mu_{{\mathcal G}}$ does \textbf{not}, in general, possess this invariance. In particular, an important ingredient
in proving the Poincar{\'e} recurrence theorem is lost in the latter case.
This naturally raises the following

\begin{question}\label{Q:analogue}
Is there a form of the Poincar{\'e} recurrence theorem for a rational semigroup having a
set of generators $\mathcal G=\{ f_1, \dots, f_N\}$, where $N\geq 2$?
\end{question}

It turns out that the above question is a version of a broader question that makes sense for a much larger class of
holomorphic dynamical systems. This is because the measure $\mu_{{\mathcal G}}$
is a special case of a type of measure that is preserved, in an appropriate sense, by a class of dynamical systems described
by Result~\ref{R:DS} below. We now proceed to describe these dynamical systems.
Let $X_1$ and $X_2$ be compact complex manifolds of dimension $k$. We shall always assume that manifolds are
connected unless stated otherwise.
A \emph{holomorphic $k$-chain} is a formal linear combination of the form
\begin{equation}\label{E:corresp}
  \Gamma= \sum_{1 \leq i \leq N} m_i\Gamma_i,
\end{equation} 
where the $m_i$'s are positive integers and $\Gamma_i$'s are distinct irreducible complex subvarieties of
$X_1 \times X_2$ of dimension $k$. Let $\pi_s$ be the projection onto $X_s$, $s=1,2$,
and let $|\Gamma|:= \cup_{i=1}^N\Gamma_i$.
We call $\Gamma$ a \emph{meromorphic correspondence of $X_1$ onto $X_2$} if 
$\pi_1|_{\Gamma_i}$ and $\pi_2|_{\Gamma_i}$ are surjective for each $1 \leq i \leq N$. A meromorphic
correspondence $\Gamma$ induces a map $F_{\Gamma}: X_1\to 2^{X_2}$ as follows: 
\[
  F_{\Gamma}(x):= \pi_2 (\pi_1^{-1} \{x\} \cap |\Gamma|).
\]
If $X_1=X_2=X$ then we call $\Gamma$ a meromorphic correspondence \textbf{on} $X$.
If for each $x \in X$ and $1 \leq i \leq N$,
$(\pi_1^{-1}\{x\} \cap \Gamma_i)$ and $(\pi_2^{-1}\{x\} \cap \Gamma_i)$
are finite sets, then we call $\Gamma$ a \emph{holomorphic correspondence on $X$}.
\smallskip

When there is no scope for confusion, we shall, for simplicity of notation, denote $F_{\Gamma}$ by
$F$. This $2^X$-valued map will be the focus of our attention. Also, if no confusion arises, we
shall (as in much of the literature we cite) \emph{refer to the correspondence $\Gamma$ underlying $F$ also as $F$}.
\smallskip

Two meromorphic correspondences on $X$ can be composed with each other\,---\,see
Section~\ref{SS:calculus} for the definition. Keeping in mind the above notational comments,
we shall write $\f^n$ to denote the $n$-fold composition
of a meromorphic correspondence $\f$ on $X$. Thus $\f$ gives rise to a dynamical system on $X$. 
Since, for $x \in X$, $\f(x)$ is not necessarily a singleton, 
one must worry about the ``right''
extension of the notion of recurrence. We define:

\begin{definition}\label{D:rec_general}
Let $\f$ be a meromorphic correspondence on a compact complex manifold $X$ and $A \subseteq X$ be a subset. 
We say that a point $x \in A$ \emph{returns} to $A$ if there exists $n \in \Z_+$ such that
$\f^n(x) \cap A \neq \emptyset$. Also, we say that $x \in A$ is \emph{infinitely recurrent} in $A$ if it returns to $A$
infinitely often, that is, there exists an increasing sequence $\{n_i\}$ of positive integers such that
$\f^{n_i}(x) \cap A \neq \emptyset$ for all $i$.
\end{definition}

The above definition, when specialized to the case of a map, yields the classical notion of
recurrence for maps. With this notion of recurrence, we are closer to formulating a
Poincar{\'e} recurrence theorem in our setting. In the setting of Definition~\ref{D:rec_general},
it is unclear if, in general, $\f$ admits any measure invariant in the classical sense (i.e., that
the measure of the pre-image of $A$ under $F$\,---\,the pre-image can be defined for a meromorphic correspondence
$F$\,---\,is equal to the measure of $A$ for each Borel $A\subseteq X$).
Now, we can \textbf{pull back} a measure $\mu$ by $\f$, denoted by $\f^*\mu$: since the definition
of $\f^*\mu$ is a bit involved, we defer it
to Section~\ref{SS:calculus}. This underlies a notion of invariance for measures called 
\emph{$\f^*$-invariance}: defined by \eqref{E:invariance} below.
Dinh--Sibony \cite{DinhSibony:dvtma06} have identified a rich class of meromorphic correspondences
$\f$ that admit ``nice'' $\f^*$-invariant measures. It turns out that this notion of
$\f^*$-invariant measures is just
the ingredient needed for the following Poincar{\'e} recurrence theorem for meromorphic correspondences:

\begin{theorem}\label{T:poin}
Let $\f$ be a meromorphic correspondence of topological degree $d_t$ on a compact complex manifold
$X$. Suppose there exists
a Borel probability measure $\mu$ on $X$ such that
$\mu$ is $\f^*$-invariant: i.e., it satisfies the condition
\begin{equation}\label{E:invariance}
  \f^*\mu=d_t \mu,
\end{equation}
and suppose $\mu$ does not put any mass on pluripolar sets.
Let $B$ be a Borel subset of $X$ such that $\mu(B) >0$.
Then $\mu$-almost every point of $B$ is infinitely recurrent in $B$. 
\end{theorem}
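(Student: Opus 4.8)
The strategy is to reduce the multi-valued situation to a setting where the classical Poincaré argument — or a variant thereof — can be run, using the $\f^*$-invariance relation $\f^*\mu = d_t\mu$ as the substitute for the classical identity $\mu(f^{-1}(B)) = \mu(B)$. The key observation is that $\f^*$-invariance should translate into an inequality of the form $\mu(\{x : \f^n(x)\cap B\neq\emptyset\}) \geq \mu(B)$ (or perhaps an equality, depending on how the pull-back is normalized), i.e. the set of points whose $n$-th iterate hits $B$ has mass at least that of $B$. Granting such an inequality, one runs the Poincaré dichotomy: let $W$ be the set of points of $B$ that return to $B$ only finitely often, suppose for contradiction $\mu(W)>0$, and extract from $W$ a subset of positive measure that never returns, then iterate to get infinitely many disjoint sets of equal positive mass inside a probability space — contradiction.

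\smallskip

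\textbf{Key steps, in order.} First I would set $B_n := \{x \in X : \f^n(x)\cap B \neq \emptyset\}$ and establish that $B_n$ is $\mu$-measurable. This is exactly where the Measurable Projection Theorem enters: $B_n$ is the image under the (continuous, hence Borel) projection $\pi_1$ of the analytic set $\pi_2^{-1}(B)\cap|\f^n|$ when $B$ is Borel, and such a projection is universally measurable even though it need not be Borel; since $\mu$ is a Borel probability measure this suffices. Second, I would prove the mass inequality $\mu(B_n)\geq \mu(B)$ using $\f^*\mu = d_t\mu$ and the fact that $\mu$ charges no pluripolar set — the pluripolar exclusion is needed because the pull-back $\f^*\mu$ is defined off the "bad" set where the correspondence fails to behave like a finite-to-one branched cover, and that bad set is (pluri)polar; one writes $\mu(B) = \tfrac1{d_t}\langle \f^*\mu, \mathbf 1_B\rangle$ and bounds the fibre-sum $\sum_{y\in\f(x)} \mathbf 1_B(y)$ above by $d_t\,\mathbf 1_{B_n}(x)$ pointwise a.e. Third, with the inequality in hand, define $W := \{x\in B : x \text{ returns to } B \text{ only finitely often}\} = B \cap \bigcup_{m\geq 1}\bigcap_{n\geq m}(X\setminus B_n)$; this is measurable by step one. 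Fourth, run the Poincaré argument: if $\mu(W)>0$, pick $m$ with $\mu(W\cap\bigcap_{n\geq m}(X\setminus B_n)) > 0$ and call this set $V$; then $V\subseteq B$, $\mu(V)>0$, and by construction $\f^n(V)\cap V = \emptyset$ for all $n\geq m$ (here one must be slightly careful: $x\in V$ and $\f^n(x)\ni y\in V\subseteq B$ would force $x\in B_n$, contradiction). The sets $V$ shifted along the correspondence — more precisely the pre-images $\widetilde V_k := \{x : \f^{km}(x)\cap V\neq\emptyset\}$ for $k\geq 0$ — then have $\mu(\widetilde V_k)\geq\mu(V)$ by step two applied to $V$, and a standard argument shows one can extract an infinite pairwise-disjoint subfamily, forcing $\sum_k \mu(V) \leq 1$, hence $\mu(V)=0$, a contradiction. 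This yields $\mu(W)=0$, i.e. $\mu$-a.e.\ point of $B$ returns infinitely often.

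\smallskip

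\textbf{Main obstacle.} The delicate point is \emph{not} the combinatorial Poincaré step, which is routine once the mass inequality is available, but rather establishing $\mu(B_n) \geq \mu(B)$ rigorously: one must unwind the precise definition of $\f^*\mu$ from Section~\ref{SS:calculus}, verify that the exceptional set on which $\f$ is not a nice finite branched covering is pluripolar, and invoke the hypothesis that $\mu$ does not charge pluripolar sets to discard it, then carefully compare the fibrewise sum $\sum_{y\in\f^n(x)}\mathbf 1_B(y)$ (counted with or without multiplicities as the definition of topological degree dictates) with $d_t\,\mathbf 1_{B_n}(x)$. A secondary technical point is ensuring that all the sets appearing — $B_n$, $W$, $V$, $\widetilde V_k$ — remain in the domain of $\mu$; this is handled uniformly by the Measurable Projection Theorem, since each is, up to a Borel operation, the $\pi_1$-projection of an analytic subset of $X\times X$ built from $|\f^n|$ and a Borel set. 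I would also double-check that composition of correspondences is compatible with iterating the mass inequality, i.e. that $(\f^n)^*\mu = d_t^n\mu$, which should follow from $\f^*$-invariance together with the multiplicativity of topological degree under composition.
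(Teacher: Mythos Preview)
Your measurability analysis (Step~1) and the mass inequality $\mu(B_n)\geq\mu(B)$ (Step~2) are on the right track and are, in essence, the paper's Lemma~\ref{L:proper}\ref{I:borel} and Lemma~\ref{L:back_ineq}. The gap is in your fourth step, the ``standard'' disjointness argument. For a \emph{map} $f$, the sets $f^{-km}(V)$ are pairwise disjoint because $x\in f^{-im}(V)\cap f^{-jm}(V)$ with $i<j$ forces $f^{im}(x)\in V$ and $f^{(j-i)m}(f^{im}(x))=f^{jm}(x)\in V$, contradicting the wandering hypothesis on $V$. This relies crucially on $f^{im}(x)$ being a \emph{single} point of $V$. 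For a correspondence, $x\in\widetilde V_i\cap\widetilde V_j$ says only that $\f^{im}(x)\cap V\ni y_1$ and $\f^{jm}(x)\cap V\ni y_2$; the branch of $\f^{jm}(x)$ that reaches $y_2$ may pass through some $z\in\f^{im}(x)$ with $z\notin V$, so no contradiction with ``points of $V$ never return to $V$'' arises. Concretely: if $\f(x)=\{a,b\}$ with $a\in V$, $b\notin V$, $\f(a)\cap V=\emptyset$, and $\f(b)\cap V\neq\emptyset$, then $x\in\widetilde V_1\cap\widetilde V_2$ while $V$ is still wandering. Thus the $\widetilde V_k$ need not be disjoint, there is no evident way to extract a disjoint subfamily, and the contradiction does not close.

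The paper avoids this obstruction entirely by using a monotone-limit argument rather than disjointness. It sets $E_n:=\bigcup_{j\geq n}(\f^j)^\dagger(B)$, so that $E_{n+1}\subseteq E_n$ and the set of infinitely recurrent points equals $B\cap\bigcap_n E_n$. One then shows $\mu(\f^\dagger(E_n))=\mu(E_{n+1})$ (this is where the care with composition of correspondences off a pluripolar set enters), and combines this with the inequality $\mu(E_n)\leq\mu(\f^\dagger(E_n))$ to get $\mu(E_n)=\mu(E_{n+1})$ for all $n$; hence $\mu(\bigcap_n E_n)=\mu(E_0)\geq\mu(B)$, which yields $\mu(B\setminus E)=0$. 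This replaces the broken disjointness step with a two-sided squeeze on a monotone sequence.
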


See Section~\ref{S:lem} for what is meant by a pluripolar subset of a compact
complex manifold.
Note that Theorem~\ref{T:poin} implies the classical Poincar{\'e} recurrence theorem in
the setting of the papers \cite{FLM:imrm83, ljubich:eprers83} with the measure $\mu_f$
(where $f: \sph\to \sph$ is rational). Beyond the $1$-dimensional setting, an immediate question arises:
are the conditions of Theorem~\ref{T:poin} at all satisfied? The following result\,---\,alluded
to above\,---\,reveals that meromorphic correspondences admitting measures that satisfy the 
conditions of Theorem~\ref{T:poin} are abundant:

\begin{result}[Dinh--Sibony, \cite{DinhSibony:dvtma06}]\label{R:DS}
Let $(X, \omega)$ be a compact K\"{a}hler manifold of dimension $k$.
Let $\f$ be a meromorphic correspondence of topological degree $d_t$ on
$(X,\omega)$. Suppose that the dynamical degree of order $k-1$, denoted $d_{k-1}$,
satisfies $d_{k-1} < d_t$. Then, the sequence of measures $\mu_n := d_t^{-n} {(\f^n)}^* {\omega}^k$
($\omega$ normalized so that $\int_X\omega^k=1$)
converges to a Borel probability measure $\mu_F$. Moreover, $\mu_F$ does not put any mass on pluripolar sets
and $\mu_F$ is $\f^*$-invariant.
\end{result}

The dynamical degrees of a correspondence are defined in Section~\ref{S:complex}.
We shall call the measure $\mu_F$ given by Result~\ref{R:DS} the \emph{Dinh--Sibony measure of} $F$.
This gives us an immediate

\begin{corollary}\label{C:poin_for_DS}
Let $(X, \omega)$ be a compact K\"{a}hler manifold of dimension $k$.
Let $\f$ be a meromorphic correspondence of topological degree $d_t$ on
$X$. Suppose that
$d_{k-1} < d_t$. Denote by $\mu_\f$ the Dinh--Sibony measure of $F$. 
Then, for any Borel subset $B\subseteq X$ with $\mu_{\f}(B) >0$,
$\mu_F$-almost every point of $B$ is infinitely recurrent in $B$.
\end{corollary}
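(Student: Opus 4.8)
The plan is to obtain Corollary~\ref{C:poin_for_DS} as an immediate consequence of Theorem~\ref{T:poin} and Result~\ref{R:DS}; no new ingredient beyond these two is required. The role of the Corollary is simply to record that the hypotheses of Theorem~\ref{T:poin} are non-vacuous in the higher-dimensional setting, and Result~\ref{R:DS} is exactly the statement that furnishes a measure meeting those hypotheses.

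Concretely, I would first apply Result~\ref{R:DS} to the meromorphic correspondence $\f$ on the compact K\"ahler manifold $(X,\omega)$. Since the dynamical degree of order $k-1$ satisfies $d_{k-1} < d_t$ by assumption, Result~\ref{R:DS} yields that the measures $\mu_n := d_t^{-n}(\f^n)^*\omega^k$ converge to a Borel probability measure $\mu_\f$ (the Dinh--Sibony measure of $F$), that $\mu_\f$ assigns no mass to pluripolar subsets of $X$, and that $\mu_\f$ is $\f^*$-invariant, i.e.\ it satisfies $\f^*\mu_\f = d_t\mu_\f$, which is precisely relation \eqref{E:invariance}. Next I would observe that a Borel probability measure satisfying \eqref{E:invariance} and charging no pluripolar set is exactly what Theorem~\ref{T:poin} requires of $\mu$. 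Taking $\mu := \mu_\f$ in Theorem~\ref{T:poin} and the given Borel set $B \subseteq X$ with $\mu_\f(B) > 0$, I would conclude that $\mu_\f$-almost every point of $B$ is infinitely recurrent in $B$, which is the desired statement.

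The only thing to verify is a minor matter of bookkeeping: that the notion of $\f^*$-invariance produced by Result~\ref{R:DS} agrees verbatim with the normalization \eqref{E:invariance} in Theorem~\ref{T:poin}, and that the word ``pluripolar'' carries the same meaning in both statements (it does, by the convention fixed in Section~\ref{S:lem}). There is no genuine obstacle at the level of the Corollary itself: all of the substantive work resides in the proof of Theorem~\ref{T:poin}\,---\,where the fact that the image of a Borel set under $\f$ need not be Borel must be handled via the Measurable Projection Theorem\,---\,and in the Dinh--Sibony construction underpinning Result~\ref{R:DS}, both of which may be invoked here as black boxes.
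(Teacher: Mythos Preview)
Your proposal is correct and matches the paper's own treatment: the paper introduces Corollary~\ref{C:poin_for_DS} as ``an immediate'' consequence of Theorem~\ref{T:poin} combined with Result~\ref{R:DS}, giving no separate proof. Your verification that the $\f^*$-invariance and pluripolar conventions agree is exactly the (trivial) bookkeeping needed.
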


One may ask why, apart from its intrinsic interest, one might care for a Poincar{\'e}-type
recurrence theorem for correspondences. To this question, consider the work
\cite{DinhKaufWu:prmdpv20}, in which the authors revisit the problem in
random matrix theory of the asymptotic behaviour of the random products
$s_n\cdots s_1$, $n=1,2,3,\dots,$ where $s_1, s_2, s_3, \dots$ are sampled i.i.d.
from ${\rm SL}_2(\C)$ relative to some probability measure $\nu$.
Issues of recurrence are natural to this problem. At the core of the analysis in
\cite{DinhKaufWu:prmdpv20} is the study of the dynamics of what the authors
call a ``generalized correspondence'' determined by $\nu$. These generalized correspondences
are essential to the results in \cite{DinhKaufWu:prmdpv20}, and questions of recurrence are
approachable through the study of recurrence phenomena for generalized correspondences.
It turns out that for any $\nu$ such that $\supp(\nu)$ is a finite set, the latter object is
a correspondence of the type studied in this work.
\smallskip

We must mention that the proof of Theorem~\ref{T:poin} is not a routine extension of the
argument proving the Poincar{\'e} recurrence theorem. Let us briefly look at the main obstacle.
Let $\f$ be a meromorphic correspondence on a compact complex manifold $X$. Given 
a Borel subset $B\subseteq X$, $\f^\dagger(B)$, where $\f^\dagger$ is the \emph{adjoint} of
$F$ (see Section~\ref{SS:calculus} for a definition), need not necessarily be a Borel subset of $X$. This is in
\textbf{sharp} contrast to the situation
when $F$ is a holomorphic map (in which case $\f^{-1}$ has the role of $\f^{\dagger}$)!
In Section~\ref{S:examples}, we give examples demonstrating this with $X={\mathbb{C}\mathbb{P}^3}$.
In short, the proof of Theorem~\ref{T:poin} requires additional tools.
In our approach, these tools come from descriptive set theory. Roughly speaking, descriptive
set theory is the study of those aspects of measurable sets in which they resemble zero-sets of certain ``nice''
functions. The key result we shall need is the Measureable Projection Theorem
(see Section~\ref{SS:dst}).
\smallskip

We now give an affirmative answer to Question~\ref{Q:analogue}. 
For a rational semigroup $S$, the \emph{Fatou set}, $\F(S)$, 
is the largest open subset of $\sph$ on which $S$ is a normal family. The complement of $\F(S)$ is called the \emph{Julia
set} of $S$, denoted by $\J(S)$.
If $S$ and $\mathcal G$ are as prior to \eqref{E:boyd}, we have the associated measure
$\mu_{\mathcal G}$ given by \eqref{E:boyd}. Boyd showed \cite[Theorem~1]{boyd:imfgrs99}
that $\supp(\mu_{\mathcal G})=\J(S)$. Thus, in the Poincar{\'e} recurrence theorem for this case, it suffices to
consider Borel sets $B \subseteq \J(S)$ (also see Remark~\ref{Re:borel_julia}). In Section~\ref{S:poin},
we shall see how we can use Corollary~\ref{C:poin_for_DS}
to prove the following version of the Poincar{\'e} recurrence theorem for rational semigroups:

\begin{theorem}\label{T:poin_rat}
Let $S$ be a finitely generated rational semigroup such that $\deg(f)\geq 2$
for every $f\in S$. Let $\mathcal G =
\{f_1,\dots, f_N\}$ be a set of generators and $\mu_{\mathcal G}$ be the measure given by \eqref{E:boyd}.
Let $B$ be a Borel subset of $\J(S)$ such that $\mu_{\mathcal G}(B) >0$. Then 
for $\mu_{\mathcal G}$-almost every $x \in B$, there exists an increasing sequence $\{n_i\}$ of positive
integers and words $g_1, g_2, g_3,\dots$ composed of $f_1,\dots, f_N$ such that
\[
  l(g_i)=n_i,  \qquad
  g_{i+1}=h_i\circ g_i \; \ \text{for some word $h_i$ with $l(h_i)=n_{i+1}-n_i$},
\]
and such that $g_{i}(x)\in B$ for each $i \in \Z_+$.
\end{theorem}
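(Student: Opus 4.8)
The plan is to realise Boyd's measure $\mu_{\mathcal G}$ as the Dinh--Sibony measure of a correspondence built from $\mathcal G$, apply Corollary~\ref{C:poin_for_DS}, and then promote the recurrence it yields to the coherent sequence of words in the statement. First I would put $\f:=\sum_{i=1}^{N}\Gamma_{f_i}$, where $\Gamma_{f_i}\subset\sph\times\sph$ is the graph of $f_i$. This is a holomorphic correspondence on $\sph$ with $\f(x)=\{f_1(x),\dots,f_N(x)\}$ and, iterating, $\f^{n}=\sum_{l(g)=n}\Gamma_{g}$ (sum over words $g$ of length $n$, with multiplicities if distinct words coincide as maps), so that $\f^{n}(x)=\{g(x):l(g)=n\}$. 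Its topological degree is $d_t=\deg(\pi_2|_{|\Gamma|})=\deg(f_1)+\dots+\deg(f_N)=d_{\mathcal G}$, and its dynamical degree of order $k-1=0$ is $d_0=\deg(\pi_1|_{|\Gamma|})=N$. Since $\deg(f_i)\geq 2$ for each $i$ we get $d_{k-1}=N<d_{\mathcal G}=d_t$, so Result~\ref{R:DS} and Corollary~\ref{C:poin_for_DS} apply to $\f$.

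Next I would check that $\mu_{\f}=\mu_{\mathcal G}$. With $\omega$ the normalised Fubini--Study form on $\sph$ one has $(\f^{n})^{*}\omega=\sum_{l(g)=n}g^{*}\omega$, and since $g^{*}\omega=\int_{\sph}g^{*}\delta_a\,d\omega(a)$ this gives $d_t^{-n}(\f^{n})^{*}\omega=\int_{\sph}\big(d_{\mathcal G}^{-n}\sum_{g(z)=a,\,l(g)=n}\delta_z\big)\,d\omega(a)$. By \eqref{E:boyd} the inner measure tends weak* to $\mu_{\mathcal G}$ for every $a\in\sph\setminus E(S)$, and $E(S)$, being finite, is $\omega$-null; testing against a continuous function and using dominated convergence, the left-hand side tends to $\mu_{\mathcal G}$. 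By Result~\ref{R:DS} it also tends to $\mu_{\f}$, hence $\mu_{\f}=\mu_{\mathcal G}$. In particular $\mu_{\mathcal G}$ is $\f^{*}$-invariant, i.e.\ $\sum_i f_i^{*}\mu_{\mathcal G}=d_{\mathcal G}\mu_{\mathcal G}$, it charges no pluripolar (hence no finite) set, and $\supp\mu_{\mathcal G}=\J(S)$, so a Borel $B\subseteq\J(S)$ with $\mu_{\mathcal G}(B)>0$ is admissible for Corollary~\ref{C:poin_for_DS}.

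Applying Corollary~\ref{C:poin_for_DS} gives that $\mu_{\mathcal G}$-a.e.\ $x\in B$ is infinitely recurrent in $B$ for $\f$; but this alone does \emph{not} yield the statement, since the words witnessing the returns at different lengths need not be related, whereas the theorem demands that they be nested along one branch. To bridge this I would run the Poincar{\'e} recurrence theorem on the branch space $\mathcal B:=\{(x_n)_{n\geq 0}\in\sph^{\N}: x_{n+1}\in\f(x_n)\ \text{for all }n\}$, a compact metric space carrying the shift $S(x_n)_n=(x_{n+1})_n$ and the projection $p_0(x_n)_n=x_0$; a point of $\mathcal B$ over $x$ is exactly a choice of indices $(j_n)_{n\geq1}$ with $x_n=f_{j_n}\circ\dots\circ f_{j_1}(x)$. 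The $S$-invariant measure to use is the path measure $\nu$ of the stationary Markov chain on $\sph$ with initial law $\mu_{\mathcal G}$ and one-step coupling $\rho:=\sum_{i=1}^{N}(\mathrm{id}\times f_i)_{*}\big(d_{\mathcal G}^{-1}f_i^{*}\mu_{\mathcal G}\big)$ on $|\Gamma|$: from $\sum_i f_i^{*}\mu_{\mathcal G}=d_{\mathcal G}\mu_{\mathcal G}$ and $(f_i)_{*}f_i^{*}=\deg(f_i)\,\mathrm{id}$ one gets $(\pi_1)_{*}\rho=(\pi_2)_{*}\rho=\mu_{\mathcal G}$, so the chain is $\mu_{\mathcal G}$-stationary, $\nu$ is supported on $\mathcal B$, is $S$-invariant, and $(p_0)_{*}\nu=\mu_{\mathcal G}$. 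Classical Poincar{\'e} recurrence for $(\mathcal B,\nu,S)$ applied to the Borel set $\{x_0\in B\}$, followed by disintegration of $\nu$ over $x_0$, yields for $\mu_{\mathcal G}$-a.e.\ $x\in B$ a branch $x=x_0,x_1,x_2,\dots$ with $x_{n_i}\in B$ for an increasing sequence $\{n_i\}$; taking $g_i:=f_{j_{n_i}}\circ\dots\circ f_{j_1}$ gives $l(g_i)=n_i$, $g_{i+1}=h_i\circ g_i$ with $h_i=f_{j_{n_{i+1}}}\circ\dots\circ f_{j_{n_i+1}}$ of length $n_{i+1}-n_i$, and $g_i(x)=x_{n_i}\in B$, which is exactly the asserted conclusion.

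The main obstacle is precisely this promotion of incoherent recurrence to recurrence along a single branch. A direct K{\"o}nig's-lemma argument fails: knowing that there is a ``good'' word at each of infinitely many levels of the $N$-ary tree of words does not produce an infinite branch meeting infinitely many good words. The remedy is to transport the problem to the branch space $\mathcal B$; the only non-formal ingredient there is that the coupling $\rho$ has both marginals equal to $\mu_{\mathcal G}$, which is just a reformulation of the $\f^{*}$-invariance of $\mu_{\mathcal G}$ together with the elementary identity $(f_i)_{*}f_i^{*}=\deg(f_i)\,\mathrm{id}$, plus the mild point that $\mu_{\mathcal G}$ ignores the finitely many points where two graphs $\Gamma_{f_i},\Gamma_{f_j}$ meet. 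This branch-space construction is, in substance, the mechanism behind Theorem~\ref{T:poin} itself (the Measurable Projection Theorem entering when one records the conclusion on $\sph$ rather than on the branch space), so one may equally well extract the coherent-branch refinement directly from the proof of that theorem.
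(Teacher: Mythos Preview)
Your setup matches the paper's: both form the holomorphic correspondence $\f_{\mathcal G}=\sum_i\Gamma_{f_i}$ on $\sph$, check $d_{k-1}=d_0=N<d_{\mathcal G}=d_t$, identify $\mu_{\mathcal G}$ with the Dinh--Sibony measure of $\f_{\mathcal G}$, and invoke Corollary~\ref{C:poin_for_DS} to get, for $\mu_{\mathcal G}$-a.e.\ $x\in B$, words $e_i$ of lengths $n'_i\to\infty$ with $e_i(x)\in B$. The divergence is in how one promotes these to a \emph{nested} sequence. The paper does this combinatorially: pigeonhole on successive first letters to build an infinite branch $f_{i_1},f_{i_2},\dots$ in the word tree such that each finite prefix $f_{i_j}\circ\cdots\circ f_{i_1}$ is extended by infinitely many $e_i$, and then pass to a ``common subsequence'' of $\{e_i\}$ and the branch prefixes. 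You instead lift to the orbit space $\mathcal B=\{(x_n):x_{n+1}\in\f_{\mathcal G}(x_n)\}$, build a shift-invariant probability $\nu$ projecting to $\mu_{\mathcal G}$ via the coupling $\rho=\sum_i(\mathrm{id}\times f_i)_*\big(d_{\mathcal G}^{-1}f_i^*\mu_{\mathcal G}\big)$ (both marginals equal $\mu_{\mathcal G}$ precisely by $\f^*$-invariance and $(f_i)_*f_i^*=\deg(f_i)\,\mathrm{id}$), and run classical Poincar\'e recurrence for the shift on $p_0^{-1}(B)$; the nested words then come for free along a $\nu$-typical branch, and disintegrating over $p_0$ pushes the conclusion down to $\mu_{\mathcal G}$-a.e.\ $x$. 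Your route costs more machinery but is airtight on exactly the point you flag as delicate: ``every finite prefix is extended by infinitely many $e_i$'' does not in general force infinitely many $e_i$ to lie \emph{on} the branch (take $N=2$ and $e_i=a\circ b^{\,i-1}$; the pigeonhole selects the all-$b$ branch, which contains no $e_i$), and the paper's passage to a common subsequence is precisely this K\"onig-type step. Two minor remarks on your write-up: your identification $\mu_{\f}=\mu_{\mathcal G}$ by integrating \eqref{E:boyd} against $\omega$ is more explicit than what the paper records; and when reading indices $(j_n)$ off a branch $(x_n)\in\mathcal B$ you should note that a measurable choice is required where two graphs $\Gamma_{f_i},\Gamma_{f_j}$ meet---a finite, hence $\mu_{\mathcal G}$-null, set of base points, so this is harmless.
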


We now turn to the subject of invariant sets. For motivation, we present the case of rational
semigroups. Consider a rational semigroup $S$ with at least one element of degree at least 2.
As with the case of the Julia set of a rational map, the Julia set of $S$, $\J(S)$, is
backward invariant. This is established by the following

\begin{result}[Hinkkanen--Martin, \cite{HinkMart:dsrf96}]\label{R:HinkMart}
Let $S$ be a rational semigroup with at least one element of degree\,$\geq 2$. Then $\J(S)$ is backward invariant, i.e., for
each $f \in S$, we have
$f^{-1}(\J(S)) \subseteq \J(S)$.
\end{result}

If $S$ is a finitely generated rational semigroup with each element of degree at least 2
and $\mathcal G=\{f_1, \dots, f_N\}$ is a generating set of $S$
then, for the associated measure $\mu_{\mathcal G}$,
$\supp(\mu_{\mathcal G})= \J(S)$ \cite[Theorem~1]{boyd:imfgrs99}. 
Thus $\supp(\mu_{\mathcal G})$ is backward invariant.
The discussion in Section~\ref{S:poin} linking Theorem~\ref{T:poin_rat} to the formalism of correspondences
suggests an extension of Result~\ref{R:HinkMart} to those meromorphic correspondences featured in
Result~\ref{R:DS}, with the support of the Dinh--Sibony measure in the role of $\J(S)$. For a meromorphic
correspondence $\f$ on $X$,
we say that a subset $A$ of $X$ is \emph{backward invariant} if $\f^\dagger(A) \subseteq A$.
In our next theorem, we prove that the support of
any measure in the larger class of measures associated with $\f$, as featured in Theorem~\ref{T:poin}, is backward
invariant up to a meagre set of points.
On the other hand, the support of any of these measures is, in general, not forward invariant (see
Example~\ref{Ex:support}). 
However,
we also prove that for each $x$ in  the support of a measure as in Theorem~\ref{T:poin}, $\f(x)$ intersects the support.
In short: \textbf{every} point of the support returns (in the sense of Definition~\ref{D:rec_general}) to
the support \textbf{each} time: i.e., for all $n \in \Z_+$.
Before stating our next theorem, we explain some terminology. 
Let $\f$ be a meromorphic correspondence induced by a holomorphic $k$-chain $\Gamma$. We define
the \emph{first} and \emph{second indeterminacy sets of $\f$} as 
\begin{align*}
  {\rm I}_1(\f)&:=\{x \in X : \dim ({\pi_1}^{-1}\{x\} \cap |\Gamma|) > 0\}, \; \text{ and} \\
  \ind(\f)&:=\{x \in X : \dim ({\pi_2}^{-1}\{x\} \cap |\Gamma|) > 0\},
\end{align*}
respectively. We are now ready to state
\begin{theorem}\label{T:recu}
Let $\f$ be a meromorphic correspondence of topological degree $d_t$ on a compact complex manifold
$X$. If $\mu$ is
a Borel probability measure on $X$ such that $\mu$ is $\f^*$-invariant
and $\mu$ does not put any mass on pluripolar sets then the following hold:
\begin{enumerate}[label=$(\alph*)$]
  \item\label{I:forward} If $x \in \supp(\mu)$ then $\f(x) \cap \supp(\mu) \neq \emptyset$.
  \item\label{I:backward} If $y \in \supp(\mu) \setminus \ind(\f)$ then $\f^{\dagger}(y) \subseteq \supp(\mu)$ and if
  $y \in \supp(\mu) \cap \ind(\f)$ then $\f^{\dagger}(y) \cap \supp(\mu) \neq \emptyset$.
\end{enumerate}
Furthermore, for every $x, y \in \supp(\mu)$ and every $n \in \Z_+$,
$\f^n(x) \cap \supp(\mu) \neq \emptyset$ and $(\f^n)^{\dagger}(y) \cap \supp(\mu) \neq \emptyset$.
\end{theorem}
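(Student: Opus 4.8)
The plan is to extract from the $\f^*$-invariance the following local principle, which is the engine behind all four assertions: for a Borel set $A$, the pull-back $\f^*\mu$ is (up to the constant $d_t$) obtained by pushing $\mu$ forward along the correspondence, so that $\mu(A)>0$ forces $\mu$ of the ``image'' of $A$ under $\f$ (resp.\ under $\f^\dagger$) to be positive as well. Concretely, writing $\mathcal N_\f(A)$ for the set of points $x$ with $\f(x)\cap A\neq\emptyset$, I would first show, using the description of $\f^*$ from Section~\ref{SS:calculus} together with the fact that $\mu$ charges no pluripolar set (so the indeterminacy sets, which are contained in proper subvarieties, hence pluripolar, are $\mu$-null), that $\mu(A)>0 \Rightarrow \mu(\mathcal N_\f(A))>0$ and symmetrically $\mu(A)>0 \Rightarrow \mu(\mathcal N_{\f^\dagger}(A))>0$. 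This is exactly the kind of statement where the Measurable Projection Theorem enters: $\mathcal N_\f(A)$ need not be Borel, but it is analytic (a projection of the Borel set $(\pi_1^{-1}(\,\cdot\,)\cap|\Gamma|)$ meeting $\pi_2^{-1}(A)$), hence universally measurable, so $\mu(\mathcal N_\f(A))$ makes sense and the inequality can be run.

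Granting this, part~\ref{I:forward} is a contrapositive argument. Suppose $x\in\supp(\mu)$ but $\f(x)\cap\supp(\mu)=\emptyset$. Since $\f(x)$ is a finite set (or at worst contained in a subvariety if $x\in\ind$, but that case is handled separately below) disjoint from the closed set $\supp(\mu)$, there is an open neighbourhood $U$ of $x$ with $\f(U)\cap\supp(\mu)=\emptyset$, i.e.\ $U\cap\mathcal N_{\f^\dagger}(\supp(\mu))=\emptyset$; equivalently no point of $\supp(\mu)$ has a preimage under $\f^\dagger$ in $U$. But $\mu(U)>0$ because $x\in\supp(\mu)$, and applying the local principle to $A=U\cap\supp(\mu)$ gives $\mu\big(\mathcal N_{\f^\dagger}(A)\big)>0$, forcing $\mathcal N_{\f^\dagger}(A)$ to meet $\supp(\mu)$ — a point of $\supp(\mu)$ whose $\f^\dagger$-image meets $U$, hence meets $\f(x)$'s ``fibre neighbourhood'', contradicting the choice of $U$ after unwinding the adjoint relation $y\in\f(x)\Leftrightarrow x\in\f^\dagger(y)$. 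Part~\ref{I:backward} is the analogous argument run in the other direction: if $y\in\supp(\mu)\setminus\ind(\f)$ then $\f^\dagger(y)$ is finite, and if some $x\in\f^\dagger(y)$ lay outside $\supp(\mu)$ we could pick a neighbourhood $V$ of $x$ disjoint from $\supp(\mu)$; then $\mu(V)=0$, yet the local principle applied with the roles of $\f$ and $\f^\dagger$ swapped, starting from a small ball around $y$ inside $\supp(\mu)$, would force positive $\mu$-mass into $V$ — contradiction. When $y\in\supp(\mu)\cap\ind(\f)$ the fibre $\f^\dagger(y)$ is a positive-dimensional subvariety; here we only claim it meets $\supp(\mu)$, and this again follows by applying the local principle to a shrinking sequence of neighbourhoods of $y$ and using compactness of $\f^\dagger(y)$ to extract a limit point in $\supp(\mu)$.

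For the final ``every point, every $n$'' assertion, I would iterate. The composition $\f^n$ is again a meromorphic correspondence of topological degree $d_t^{\,n}$ (Section~\ref{SS:calculus}), and $\mu$ is $(\f^n)^*$-invariant: $(\f^n)^*\mu=((\f^*)^n)\mu=d_t^{\,n}\mu$. Moreover $\mu$ still charges no pluripolar set, so parts~\ref{I:forward} and the first clause of~\ref{I:backward} (in its ``intersection'' form, which is all that is needed) apply verbatim to $\f^n$, giving $\f^n(x)\cap\supp(\mu)\neq\emptyset$ and $(\f^n)^\dagger(y)\cap\supp(\mu)\neq\emptyset$ for all $x,y\in\supp(\mu)$ and all $n\in\Z_+$. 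The main obstacle, as the paper itself flags, is the non-Borelness of $\f^\dagger(B)$: every place where I wrote ``$\mu(\mathcal N_\f(A))$'' requires knowing this set is at least universally measurable, and the honest justification of the local principle — that pulling back $\mu$ and integrating against $\mathbf 1_A$ really does see the projected set with the right multiplicities — has to be phrased through the Measurable Projection Theorem rather than a naïve change-of-variables. Verifying that the exceptional (indeterminacy / critical) loci where the fibre structure degenerates are genuinely pluripolar, hence invisible to $\mu$, is the other point requiring care, but it reduces to the standard fact that proper analytic subsets of a complex manifold are pluripolar.
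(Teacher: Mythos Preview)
Your approach to part~\ref{I:forward} and to the ``every $n$'' clause is essentially the paper's: the paper packages your local principle as the explicit inequalities $\mu(A)\le\mu(\f^\dagger(A))$ and $\mu(A)\le\mu(\f(A))$ (Lemmas~\ref{L:back_ineq} and~\ref{L:forw_ineq}), and for~\ref{I:forward} it uses upper semi\-continuity of the closed relation $|\Gamma|$ (Result~\ref{R:relation}) to produce an open $V\ni x$ with $\f(V)$ missing $\supp(\mu)$, concluding via $\mu(V)\le\mu(\f(V))=0$. You assume that continuity step without naming it, and your $\mathcal N_{\f}/\mathcal N_{\f^\dagger}$ bookkeeping gets a little tangled, but the argument underneath is the same. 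Your shrinking-balls extraction for $y\in\supp(\mu)\cap\ind(\f)$ also works and is equivalent to what the paper does there.

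There is, however, a genuine gap in your argument for the first clause of~\ref{I:backward}, the inclusion $\f^\dagger(y)\subseteq\supp(\mu)$ when $y\notin\ind(\f)$. Applying your local principle to a small ball $W\ni y$ yields $\mu(\f^\dagger(W))>0$, but $\f^\dagger(W)$ contains \emph{all} branches of preimage, not just the one through your chosen $x\in\f^\dagger(y)$; nothing in the principle forces any of that mass into the specific neighbourhood $V\ni x$ with $\mu(V)=0$. The principle controls only the total mass of $\f^\dagger(W)$, not its distribution among sheets. The paper fills this gap with an ingredient you do not invoke: Remmert's Open Mapping Theorem (Result~\ref{R:OMT}). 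Because $y\notin\ind(\f)$, the restriction of $\pi_2$ to $|\Gamma|\setminus\pi_2^{-1}(\ind(\f))$ has constant rank $k$ and is therefore open, so $\f(V)$ contains $y$ as an \emph{interior} point. One then uses the \emph{other} half of Lemma~\ref{L:forw_ineq}, namely $\tfrac{1}{d_t}\mu(\f(V))\le\mu(V)=0$, to get $\mu(\f(V))=0$, contradicting $y\in\supp(\mu)$. Note that this inequality is not the contrapositive of your stated principle, and the openness of $\f(V)$ at $y$ is exactly where the hypothesis $y\notin\ind(\f)$ is spent; without it the strong inclusion cannot be obtained by the mass-pushing argument alone.
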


As an application, we give an interesting characterization of the support of the Dinh--Sibony
measure. Given $\f$, we say that a set $A$ is \emph{nearly backward invariant} if
$\f^\dagger(A \setminus \cup_{n=1}^{\infty} \ind(\f^n)) \subseteq A$.

\begin{corollary}\label{C:recu}
Let $(X, \omega)$ be a compact K{\"a}hler manifold.
Let $\f$ be a meromorphic correspondence of topological degree $d_t$ satisfying
$d_{k-1} < d_t$. Denote by $\mu_\f$ the Dinh--Sibony measure of $F$. Then $\supp(\mu_\f)$ is the smallest
closed non-pluripolar nearly backward invariant subset of $X$, by which we mean the following: writing
\[
  \mathscr S := \{ C \subseteq X : C \textnormal{ is closed, non-pluripolar and nearly backward invariant}\},
\]
we have that $\supp(\mu_\f) \in \mathscr S$ and $\supp(\mu_\f) \subseteq C$ for every $C \in \mathscr S$. 
\end{corollary}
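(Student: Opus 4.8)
I would split the assertion into its two required halves: that $\supp(\mu_\f)$ itself lies in $\mathscr S$, and that $\supp(\mu_\f)\subseteq C$ for every $C\in\mathscr S$. For the first half, $\supp(\mu_\f)$ is closed by definition, and it is non-pluripolar because $\mu_\f(\supp(\mu_\f))=1$ while, by Result~\ref{R:DS}, $\mu_\f$ charges no pluripolar set; so a pluripolar $\supp(\mu_\f)$ is impossible. Backward invariance is where I would use Theorem~\ref{T:recu}\,\ref{I:backward}, noting that its second alternative is vacuous here: as $\f$ is a \emph{holomorphic} correspondence, $\pi_2^{-1}\{y\}\cap|\Gamma|$ is finite, hence $0$-dimensional, for every $y\in X$, so $\ind(\f)=\varnothing$. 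Theorem~\ref{T:recu}\,\ref{I:backward} then gives $\f^{\dagger}(y)\subseteq\supp(\mu_\f)$ for every $y\in\supp(\mu_\f)$, i.e.\ $\f^{\dagger}(\supp(\mu_\f))\subseteq\supp(\mu_\f)$. (This is precisely the point at which ``holomorphic'', and not merely ``meromorphic'', is needed.)

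For the second half, fix $C\in\mathscr S$. The tool I would invoke is the equidistribution of preimages of a generic point in the Dinh--Sibony setting: under the standing hypothesis $d_{k-1}<d_t$ there is a pluripolar set $\mathcal E\subseteq X$ such that $d_t^{-n}(\f^n)^{*}\delta_a\to\mu_\f$ weakly for every $a\in X\setminus\mathcal E$ (the finer companion of Result~\ref{R:DS}, in which $\omega^k$ is replaced by a generic Dirac mass; see \cite{DinhSibony:dvtma06}). Since $C$ is non-pluripolar and $\mathcal E$ is pluripolar, we may choose $a\in C\setminus\mathcal E$. By the definition of the pull-back of a measure, $(\f^n)^{*}\delta_a$ is a positive measure supported on the finite set $(\f^n)^{\dagger}(a)$, and $d_t^{-n}(\f^n)^{*}\delta_a$ is a probability measure (finiteness of these fibres being guaranteed by $\f$ being holomorphic). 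Now backward invariance of $C$ means $\f^{\dagger}(C)\subseteq C$; since $(\f^n)^{\dagger}=(\f^{\dagger})^{n}$ as set-valued maps, iterating gives $(\f^n)^{\dagger}(C)\subseteq C$ for every $n$, and hence $(\f^n)^{\dagger}(a)\subseteq C$ since $a\in C$. Thus each $d_t^{-n}(\f^n)^{*}\delta_a$ is a probability measure supported in the closed set $C$, and passing to the weak$^{*}$ limit we get that $\mu_\f$ is supported in $C$, i.e.\ $\supp(\mu_\f)\subseteq C$.

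I expect the only genuinely non-formal input to be the preimage-equidistribution statement used in the second half. The weaker convergence of Result~\ref{R:DS} will not do on its own: it concerns the \emph{average} $d_t^{-n}(\f^n)^{*}\omega^k$ of the Dirac pull-backs, and the mass such an average assigns to a fixed open set need not decay to $0$, so concentration of $\mu_\f$ cannot be detected this way. What one really needs is that, off a pluripolar exceptional set, the Dirac pull-backs equidistribute \emph{individually}; this is also what forces the ``non-pluripolar'' clause in the definition of $\mathscr S$, since it is exactly what guarantees that a member $C$ of $\mathscr S$ contains a point whose $\f$-preimages can be controlled.
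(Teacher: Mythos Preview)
Your proof is correct and follows essentially the same route as the paper's: both halves are argued identically, using $\ind(\f)=\varnothing$ together with Theorem~\ref{T:recu}\,\ref{I:backward} for membership in $\mathscr S$, and the Dinh--Sibony equidistribution of generic preimages (stated in Section~\ref{S:complex}) to show minimality. Your justification of $(\f^n)^{\dagger}(a)\subseteq C$ is slightly more explicit than the paper's, and in any case the needed inclusion $(\f^n)^{\dagger}(a)\subseteq \f^{\dagger}\big(\cdots \f^{\dagger}(a)\cdots\big)$ already follows from Remark~\ref{Re:compo}\,\ref{I:compo_sub}, so even without verifying equality your argument goes through.
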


One reason the above is interesting is because it has the following immediate
corollary characterizing the support of the Dinh--Sibony measure of a
\textbf{holomorphic} correspondence. As the support of the Dinh--Sibony measure of a rational
map $f$ on $\sph$ with $\deg(f)\geq 2$ (which is just the measure $\mu_f$ above) is its Julia set,
this characterization is reminiscent of one of the characterizations of the Julia set of a rational 
map (see \cite[Theorem~3.1]{McMullen:cdar94}, for instance). The corollary below
is immediate because, for
a holomorphic correspondence $\f$, $\cup_{n=1}^{\infty} \ind(\f^n)=\emptyset$.

\begin{corollary}\label{C:recu_holo}
Let $(X, \omega)$ be a compact K{\"a}hler manifold.
Let $\f$ be a {\bf{\textit{holomorphic}}} correspondence of topological degree $d_t$ satisfying
$d_{k-1} < d_t$, and let $\mu_\f$ denote its Dinh--Sibony measure. Then $\supp(\mu_\f)$ is the smallest
closed non-pluripolar backward invariant subset of $X$.
\end{corollary}
   
\begin{remark}
Stronger versions of the above corollaries
seem to be attainable in view of an assertion (about the ``exceptional set $\mathscr{E}$'') following
Theorem~1.2 in \cite{DinhSibony:dvtma06} wherein the words ``non-pluripolar'' in these corollaries are
replaced by ``not contained in a finite or countable union of proper complex subvarieties of $X$''. 
It is not entirely clear how the latter assertion would follow (except for correspondences of
a special form) merely by adapting the techniques used in references accompanying this claim.
We plan to return to this matter in a forthcoming work.
For this reason, Corollaries~\ref{C:recu} and~\ref{C:recu_holo} appear in the above form.
\end{remark}

\section{Fundamental definitions}\label{S:def}

In this section, we shall collect some definitions and concepts about meromorphic correspondences
that we had mentioned in 
passing in Section~\ref{S:intro}. We shall
also state certain classical definitions and results from descriptive set theory.

\subsection{Calculus of meromorphic correspondences}\label{SS:calculus}

While the objects discussed in this subsection are of somewhat recent
origin, the facts stated here are standard.
We refer the reader to \cite{DinhSibony:dvtma06} for a more detailed discussion.
\smallskip

Let $X$ be a compact complex manifold, let $\Gamma$ be a meromorphic correspondence on $X$, and
let $\f$ and $\Gamma$ be related as described in Section~\ref{S:intro}.
With the presentation of $\Gamma$ as in \eqref{E:corresp},
the coefficient $m_i\in \Z_+$ will be called the \emph{multiplicity} of $\Gamma_i$.
We shall call $\Gamma$ the \emph{graph} of $\f$. 
We define the \emph{support} of the correspondence $\f$ by $|\Gamma|:=\cup_{i=1}^{N} \Gamma_i$.
For $\Gamma_i$ as above, we define
$\Gamma_i^{\dagger}:=\{ (y,x) : (x,y) \in \Gamma_i\}$. Now we use this to define
the \emph{adjoint}
\[
  \Gamma^{\dagger}:= \sum_{1 \leq i \leq N} m_i\Gamma_i^{\dagger}.
\]
The meromorphic correspondence $\Gamma^{\dagger}$ is called the \emph{adjoint} of $\Gamma$.
If $A$ is a subset of $X$ then we define the following set-valued maps
\[
  \f(A):= \pi_2 (\pi_1^{-1} (A) \cap |\Gamma|) {\rm{\ and \ }} \f^{\dagger}(A):= \pi_1 (\pi_2^{-1} (A) \cap |\Gamma|).
\]
For convenience, we have denoted $\f(\{x\})$ and $\f^{\dagger}(\{x\})$ by $\f(x)$ and $\f^{\dagger}(x)$
respectively in Section~\ref{S:intro}. We shall adopt the notational convenience noted in Section~\ref{S:intro}
and refer to the correspondence $\Gamma^{\dagger}$ as $\f^{\dagger}$.
\smallskip

The \emph{topological degree} of $\f$ is the number of points in a generic fiber counted with multiplicities.
It is well known that there exists a non-empty Zariski-open set $\Omega \subseteq X$ such that,
writing $Y^i:= \pi_2^{-1}(\Omega)\cap \Gamma_i$, the
map $\left.\pi_2\right|_{Y^i}: Y^i \to \Omega$ is a $\delta_i$-sheeted holomorphic covering for
some $\delta_i \in \Z_+$, $i=1, \dots , N$. Then the topological degree $d_t(\f)$ of $\f$ is 
\begin{equation}\label{E:topdeg}
  d_t(\f):= \sum_{i=1}^N m_i \delta_i = \sum_{i=1}^N m_i \ \sharp\{y: (y,x) \in \Gamma_i\}, \quad x \in \Omega,
\end{equation}
where $\sharp$ denotes the cardinality. We shall use $d_t$ instead of $d_t(\f)$ whenever there is no confusion.%
\smallskip

Let $\f$ and $\f'$ be two meromorphic correspondences on $X$ induced by holomorphic $k$-chains
\[
  \Gamma = \decosum{1\leq i\leq M}\Gamma_i  {\rm{ \ and \ }}
  \Gamma' = \decosum{1 \leq j \leq M'} \Gamma'_j
\]
respectively. In the above presentation of $\Gamma$ (resp., $\Gamma'$), we do not assume that the
$\Gamma_i$'s (resp., $\Gamma'_j$'s) are distinct varieties\,---\,varieties repeat according to multiplicities.
The ``decorated'' summation above will denote the latter presentation.
Then, by definition, $\f \circ \f'$ is the meromorphic correspondence induced by
\[
  \Gamma \circ \Gamma'= \decosum{1 \leq i \leq M} \decosum{1 \leq j \leq M'}\Gamma_i  \circ \Gamma'_j,
\]
where $\Gamma_i  \circ \Gamma'_j$ is defined as in the following paragraph.
\smallskip

Let $\mathcal A(\f)$ be the smallest complex subvariety of $X$ such that $\pi_l$ resticted to
$|\Gamma| \setminus \pi_l^{-1} (\mathcal A(\f))$ is a holomorphic covering over $X \setminus \mathcal A(\f)$, $l=1,2$.
Let $\mathcal A(\f')$ be defined similarly. Let $\Omega \subseteq X\setminus \mathcal A(\f)$ be a Zariski-open subset
of $X$. Choose a Zariski-open set $\Omega' \subseteq X\setminus \mathcal A(\f')$ satisfying $\f'(\Omega') \subseteq \Omega$.
For example, we can take $\Omega'= (X\setminus \mathcal A(\f')) \setminus 
(\f')^{\dagger}(X \setminus \Omega)$.
Now consider the closure $C_{ij}$ in $X \times X$ of
\begin{equation}\label{E:compos}
  \{(x_1,x_3) \in \Omega' \times X : \exists x_2 \in X {\rm{ \ such \ that \ }}
  (x_1,x_2) \in \Gamma'_j {\rm{ \ and \ }} (x_2,x_3) \in \Gamma_i\}.
\end{equation}
The composition $\Gamma_i  \circ \Gamma'_j$ is the holomorphic $k$-chain whose support is $C_{ij}$
and the multiplicities of whose irreducible components are as follows. Let $C_{ij,\,s}$ denote an arbitrary
irreducible component of $C_{ij}$. Then, its multiplicity in $\Gamma_i  \circ \Gamma'_j$
is the distinct number of $x_2$'s satisfying the condition stated in \eqref{E:compos} for a
\textbf{generic} $(x_1,x_3)\in C_{ij,\,s}$.
We would like to emphasize that $\Gamma_i  \circ \Gamma'_j$ need not be irreducible.
This is the reason why the data defining a meromorphic correspondence must include multiplicities.
Note, furthermore, that the above definition of $\Gamma_i  \circ \Gamma'_j$ is independent of the choice of $\Omega$ and
$\Omega'$. With the above definition of composition, in general, $\f \circ \f'(x)$ need not be same as $\f(\f'(x))$.
But we have

\begin{remark}\label{Re:compo}
If $\f$ and $\f'$ are two meromorphic correspondences on $X$ then 
the following hold:
\begin{enumerate}[label=$(\alph*)$]
  \item \label{I:compo_sub} For every $x \in X$, we have
  \[
    \f \circ \f'(x) \subseteq \f(\f'(x)).
  \]
  \item \label{I:compo_eq} There exists a proper complex subvariety 
    $\mathcal A$ of $X$ such that for every $x \notin \mathcal A$, we have
  \[
    \f \circ \f'(x)= \f(\f'(x)).
  \]
\end{enumerate}
\end{remark}

Let $\D$ be a current on $X$ of bidegree $(p,p)$, $0 \leq p \leq k$. We can pull back $\D$ using the following 
prescription:
\begin{equation}\label{E:pull_current}
  \f^*(\D):= (\pi_1)_*(\pi_2^*\D\wedge [\Gamma])
\end{equation}
whenever the intersection current $(\pi_2^*\D\wedge [\Gamma])$ makes sense. 
Here, $[\Gamma]$ denotes the sum (weighted by multiplicities) of the currents of integration on   
the varieties that constitute $\Gamma$.
In this paper, we are mainly interested in the pull-back of a finite Borel measure\,---\,which can be viewed as
a current of bidegree $(k,k)$. Let $\nu$ be a finite Borel measure on $X$. We will work out
$\f^*\nu$ in detail here. Let $\phi$ be a continuous function on $X$.
\[
  \langle \f^*\nu , \phi \rangle
  =\langle \pi_2^*(\nu)\wedge [\Gamma], \pi_1^*\phi \rangle
  :=\sum_{1 \leq i \leq N} m_i \langle \nu, (\pi_2|_{\Gamma_i})_* (\phi \circ \pi_1) \rangle.
\]
Let $\Omega \subseteq X$ be a Zariski-open set and let $(Y^i, \Omega, \left.\pi_2\right|_{Y^i})$
be the holomorphic coverings introduced prior to \eqref{E:topdeg}
for each $ i=1, \dots , N$. Then for each $x \in \Omega$,
$(\pi_2|_{\Gamma_i})_* (\phi \circ \pi_1)(x)$ is the sum of the values of 
$\phi \circ \pi_1$ on the fiber $\pi_2^{-1}\{x\} \cap \Gamma_i$.
Thus we get
\[
  (\pi_2|_{\Gamma_i})_* (\phi \circ \pi_1)(x)= \sum_{y: (y,x) \in \Gamma_i} \phi(y).
\]
Let us introduce the following notation:
\begin{equation}\label{E:notation}
  T(\phi)(x):= \sum_{i=1}^N m_i \sum_{y: (y,x) \in \Gamma_i} \phi(y), \quad x \in \Omega.
\end{equation}
It is classical that $T(\phi)$ extends continuously to $X \setminus \ind(\f)$ and we define
\[
  \langle \f^*\nu, \phi \rangle := \int_{X \setminus \ind(\f)} T(\phi)(x) d\nu(x).
\]
In view of the Riesz representation theorem, $\f^*\nu$ is a measure, which acts on
continuous functions with the above rule. In particular, if $\nu$ is a Borel probability measure
that puts zero mass on proper complex subvarieties then $\f^*\nu$ is a positive measure of total mass equal to the
topological degree of $\f$. In this case, we also have 
\[
  \langle \f^*\nu, \phi \rangle = \int_{\Omega} T(\phi)(x) d\nu(x),
\]
where $\Omega$ is as above. We shall use this to obtain various inequalities in Section~\ref{S:lem}.
\smallskip

We end this section with a discussion of the pull-back of a measure with respect to the composition of two
meromorphic correspondences. Let $\f$ and $\f'$ be two meromorphic correspondences on $X$. For
any bounded continuous function $\varphi$ defined on a subset of $X$ such that
$X\setminus {\sf dom}(\varphi)$ is a (possibly empty) proper complex subvariety of $X$, let 
$T(\varphi)$, $T'(\varphi)$ and $T''(\varphi)$ denote continuous functions (defined on the largest
possible subsets of ${\sf dom}(\varphi)$) associated\,---\,through the construction above\,---\,with $\f$, $\f'$ and
$\f \circ \f'$, respectively. Let $\phi$ be a continuous function on $X$.
In analogy with Remark~\ref{Re:compo}, we can see that
there exists a complex subvariety $\mathcal A$ of $X$ such that
$T''(\phi) \equiv T(T'(\phi))$ on $X \setminus \mathcal A$. Thus, if the measure $\nu$
puts zero mass on proper complex subvarieties of $X$ then it follows that
\[
  (\f \circ \f')^*\nu = \f'^*(\f^*\nu).
\]
This fact will be crucial in our proof of Theorem~\ref{T:recu}.
\medskip

\subsection{Some descriptive set theory}\label{SS:dst}

The field of descriptive set theory is the study of certain classes of ``well-behaved'' subsets of a
Polish space. The aspect of this theory that concerns us goes back to Suslin's discovery of an error 
in Lebesgue's claim that the projection of a Borel subset of $\R\times\R$
into one the factors is Borel. The result that we need belongs to the theory concerning projections of Borel subsets
of product spaces that arose from Suslin's discovery.
Recall that a \emph{Polish space} is a separable complete metrizable topological space. In particular,
every compact (connected) complex manifold is a Polish space. Before stating a vital result,
we need the following

\begin{definition}\label{D:uni_meas}
Let $(Z, \mathcal F)$ be a measurable space.
A subset $A$ of $Z$ is said to be \emph{universally measurable} if $A$ belongs to the completion of the
$\sigma$-algebra $\mathcal F$ with respect to $\nu$ for every positive finite measure $\nu$ on $(Z, \mathcal F)$.
\end{definition}

We state a result that will be very useful in analysing the structure of the projections of Borel subsets
of $X\times X$ into either of its factors\,---\,for $X$ as in the previous sections. We 
give a version that is stated in the literature (see \cite[Chapter~2]{crauel:rpmPs02}, for instance) for Polish spaces.

\begin{result}[\textsc{Measurable projection theorem}]\label{R:MPT} 
Let $(Z, \mathcal F)$ be a measurable space and $Y$ be a Polish space. Let 
$\B$ be the Borel $\sigma$-algebra of $Y$. Then for every set $A$ in the product $\sigma$-algebra
$\mathcal F \otimes \B$, the projection of $A$ into $Z$
is universally measurable.
\end{result}

\section{Complex analytic preliminaries}\label{S:complex}

This section is devoted to discussing the Dinh--Sibony measure associated with certain meromorphic correspondences, 
which we had mentioned in Section~\ref{S:intro}. We shall also mention a version of the Open Mapping Theorem,
which we use to prove Theorem~\ref{T:recu}. Most of the material in this section is well known; 
the reader familiar with these concepts can safely move on to the next section. 
\smallskip

To discuss the existence of the Dinh--Sibony measure for a meromorphic correspondence, 
we need to define the pull-back of a smooth $(p,p)$-form.
Let $\f$ be a meromorphic correspondence on a compact complex manifold of dimension $k$. Consider
a smooth $(p,p)$-form $\alpha$, $0 \leq p \leq k$.
Since $\alpha$ is also a current of bidegree $(p,p)$, the prescription \eqref{E:pull_current}
defines $\f^*(\alpha)$, since $\pi_2^* \alpha \wedge [\Gamma]$ makes sense as a $(p,p)$-current
on $|\Gamma|$. Indeed, in the case $\Gamma$ is a submanifold, for any $(k-p, k-p)$ form $\Theta$ on
$\Gamma$, one has
\[
  \langle \pi_2^* \alpha \wedge [\Gamma], \Theta\rangle := \int\nolimits_{\Gamma}\Theta\wedge 
  \big(\left. \pi_2\right|_{\Gamma}\big)^*\alpha.
\] 
In the general case, we consider desingularizations of the irreducible components of $|\Gamma|$ to define
$\pi_2^* \alpha \wedge [\Gamma]$. It turns out that the pull-back operation is independent of the desingularizations
chosen.
\smallskip

Consider a compact K\"{a}hler manifold $(X, \omega)$ of dimension $k$, and let $\int {\omega}^k=1$.
Consider a meromorphic correspondence $\f$ on $X$ of topological degree $d_t$.
We define the dynamical degree of order $p$, $0 \leq p \leq k$,
\[
  d_{p}(\f):= \lim_{n \to \infty} {\left( \int_X {(\f^n)}^* {\omega}^{p} \wedge {\omega}^{k-p} \right)}^{1/n}.
\]
Note that $d_k(\f)$ is just the topological degree of $\f$ and 
$d_0(\f)$ is the topological degree of $\f^{\dagger}$.
Thus a meromorphic correspondence $\f$ is a dominant meromorphic map if and only if
$d_0(\f)=1$. We shall use $d_p$ instead of $d_p(\f)$ whenever there is no confusion.
We now define
a sequence $\mu_n:=d_t^{-n} {(\f^n)}^* {\omega}^k$.
Since ${\omega}^k$ is a volume form on $X$, it follows that $\mu_n$ is a sequence
of probability measures.
Under the hypothesis that
$d_{k-1} < d_t$, Dinh--Sibony proved \cite[Section~5]{DinhSibony:dvtma06}
that $\mu_n$ converges in the weak* topology to a $\f^*$-invariant probability measure $\mu_{\f}$.
In fact, they showed that if $u$ is a quasi-p.s.h. function (a function that is locally the sum of a
smooth function and a plurisubharmonic function) then $u$ is $\mu_{\f}$-integrable and
$\langle\mu_n, u\rangle \to \langle\mu_{\f}, u\rangle$ as $n \to \infty$.
This is the mode of convergence implicit in Result~\ref{R:DS}.
In particular, $\mu_{\f}$ puts zero mass on pluripolar sets.
They also showed that preimages of a generic point are equidistributed according to the measure $\mu_{\f}$,
i.e., there exists a pluripolar subset $E$ of $X$ such that for every $a \in X \setminus E$, we have
\[
  d_t^{-n} {(\f^n)}^* {\delta}_a\to \mu_{\f}
\] 
as $n \to \infty$. See \cite[Sections~1 and~5]{DinhSibony:dvtma06} for a detailed discussion.
\smallskip

To state the Open Mapping Theorem alluded to above, we need some terminology and notations. We first
introduce them:
let $X_1$ and $X_2$ be complex manifolds (not necessarily compact) of dimensions $k_1$ and $k_2$, respectively.
Moreover, let $\mathcal A \subseteq X_1$ be a complex subvariety of pure dimension.
Let $f : X_1 \to X_2$ be a holomorphic mapping and $f|_{\mathcal A}$ denote the restriction of $f$ to $\mathcal A$.
Note that, for $z \in \mathcal A$, ${(f|_\mathcal A)}^{-1}  f|_\mathcal A (z) := f^{-1}(f(z)) \cap \mathcal A$ 
is a complex subvariety of $X_1$.
Before stating a result, we define the \emph{rank} of $f|_\mathcal A$ at $z \in \mathcal A$ as 
\[
  {\rm{rank}}_z f|_\mathcal A := {\rm{dim}}_z \mathcal A - {\rm{dim}}_z  {(f|_\mathcal A)}^{-1}  f|_\mathcal A (z).
\]
We are now ready to state

\begin{result}[Remmert, \cite{Remmert:hmaka57}]\label{R:OMT}
Let $X_1$, $X_2$, $\mathcal A$ and $f$ be as above.
Then $f|_{\mathcal A}:\mathcal A \to X_2$ is an open mapping if and only if
${\rm{rank}}_z f|_\mathcal A = {\rm{dim}}(X_2)$ for all $z \in \mathcal A$.
\end{result}

The above result follows from Satz 28 and 29 in \cite{Remmert:hmaka57}\,---\,although Remmert requires $\mathcal A$
to be irreducible, the same conclusion follows if we take $\mathcal A$ to be of pure dimension
(see, for instance, \cite{NagOchi:gftscv90}, Theorem 4.3.4).
\medskip

\section{Examples}\label{S:examples}

Let $X$ be a compact complex manifold and $\f$ a meromorphic correspondence on $X$. 
We had mentioned in Section~\ref{S:intro} that if $B$ is a Borel subset of $X$ then, in general, 
$F^{\dagger}(B)$ is not a Borel subset of $X$.
In this section, we present some examples of meromorphic correspondences
demonstrating that this is a genuine difficulty in proving the results in Section~\ref{S:intro}. Specifically,
we give examples of $\f$ on $\mathbb{C}\mathbb{P}^3$
for which there exists a Borel subset $B$ of $\mathbb{C}\mathbb{P}^3$ such that $F^{\dagger}(B)$
is not a Borel subset of $\mathbb{C}\mathbb{P}^3$. These constructions can easily be generalized
to $\projspace$ for every $k \geq 3$. We also give an example of a correspondence demonstrating
the need for defining recurrence as in Definition~\ref{D:rec_general}.
To present our first two examples, we need a result about bimeasurable functions which we state first.
\smallskip

Let $X$ and $Y$ be topological spaces. Recall that
a function $f:X \to Y$ is Borel measurable if the preimage (under $f$) of every Borel subset of $Y$
is a Borel subset of $X$.
We say that a Borel measurable function $f$ is \emph{bimeasurable} if the image (under $f$) of every Borel subset of $X$ is 
a Borel subset of $Y$.

\begin{result}[Purves, \cite{Purves:bf66}]\label{R:bimeasurable}
Let $X$ and $Y$ be complete separable metric spaces and $E$ a Borel subset of $X$.
Consider a Borel measurable function $f:E \to Y$. 
In order that $f$ is bimeasurable it is necessary and sufficient that the set
$\{\zeta \in Y: f^{-1}{\{\zeta\}} \textnormal{ is uncountable}\}$ is countable. 
\end{result}

For an alternative proof of the above result, also see \cite{Mauldin:bf81}.
\smallskip

If $X$ is as in Section~\ref{S:intro} and $f: X\to X$ is a dominant meromorphic map then, recall, its graph $\Gamma_{f}$
is a meromorphic correspondence on $X$ as defined in Section~\ref{S:intro}. For convenience of
notation, we shall denote the correspondence $\Gamma_{f}$ simply by $f$.

\begin{example}\label{Ex:map}
An example of a dominant meromorphic self-map $g$ on ${\mathbb{C}\mathbb{P}^3}$ such that there exists a Borel subset
$B$ of ${\mathbb{C}\mathbb{P}^3}$ for which $g^\dagger(B)$ is not Borel.
\end{example}
 
\noindent{
We begin with an auxiliary map. 
Consider the meromorphic self-map $f$ on ${\mathbb{C}\mathbb{P}^3}$ given by
\[
f[x:y:z:t]=[x^2+y^2+zt:x^2+yt:xt:t^2].
\]
Note that $f$ is a bimeromorphic map; $f|_{\{[x\,:\,y\,:\,z\,:\,t]\,|\,t \neq 0\}}$ is biholomorphic.
Let ${\rm I_1}(f)$ and ${\rm I_2}(f)$ denote the first and the second indeterminacy sets of $f$ respectively.
It is easy to compute
\[
{\rm I_1}(f)= \{[0:0:1:0]\} \textnormal{ and } {\rm I_2}(f)= \{[x:y:0:0] | (x, y) \in \C^2\setminus \{0\}\}.
\]
Let $\fr$ denote the restriction of $f$ to the open set ${\mathbb{C}\mathbb{P}^3} \setminus {\rm I_1}(f)$.
Since $\fr:{\mathbb{C}\mathbb{P}^3} \setminus {\rm I_1}(f) \to {\mathbb{C}\mathbb{P}^3}$ is holomorphic,
$\fr$ is Borel measurable. 
By definition of ${\rm I_2}(f)$, $f^{\dagger}(\zeta)$ is uncountable for every $\zeta \in {\rm I_2}(f)$.
Observe that $\fr^{-1}\{\zeta\}= f^{\dagger}(\zeta) \setminus {\rm I_1}(f)$ for every
$\zeta \in {\mathbb{C}\mathbb{P}^3}$.
Since ${\rm I_1}(f)$ is a singleton, for every $\zeta \in {\rm I_2}(f)\cap {\sf range}(\fr)$, 
$\fr^{-1}\{\zeta\}$ is an uncountable subset of 
${\mathbb{C}\mathbb{P}^3} \setminus {\rm I_1}(f)$.
It is easy to see that ${\rm I_2}(f)\cap {\sf range}(\fr)$ is uncountable. Thus,
$\{\zeta \in {\mathbb{C}\mathbb{P}^3}: \fr^{-1}{\{\zeta\}} \textnormal{ is uncountable}\}$ is uncountable.
Thus, by Result~\ref{R:bimeasurable}, $\fr$ is not bimeasurable, i.e.,
there exists a Borel subset $B'$ of ${\mathbb{C}\mathbb{P}^3} \setminus {\rm I_1}(f)$
such that $\fr(B')$ is not Borel. It is easy to see that $B'$ is also a Borel subset of ${\mathbb{C}\mathbb{P}^3}$.
Since $B'$ is a subset of ${\mathbb{C}\mathbb{P}^3} \setminus {\rm I_1}(f)$, we get $\fr(B')=f(B')$.
Thus we have a Borel subset $B'$ of ${\mathbb{C}\mathbb{P}^3}$ such that
$f(B')$ is not Borel.
\smallskip

Now, consider  the ``bimeromorphic inverse'' $g$ of $f$
(since $f$ is bimeromorphic, $g$ exists).
As seen above, there exists a Borel subset $B'$ of ${\mathbb{C}\mathbb{P}^3}$ such that 
$f(B')$ is not a Borel set. Since $f(B')= g^\dagger(B')$, $g^\dagger(B')$ is not a Borel set.
Thus, we have an example of a meromorphic map $g$ and a Borel set $B'$ such that
$g^\dagger(B')$ is not a Borel set.
\smallskip

More generally, the above argument works for any bimeromorphic map $f$ on $\projspace$ ($k \geq 3$)
for which ${\rm I_1}(f)$ and ${\rm I_2}(f)$ satisfy
${\rm I_1}(f) \cap f^{\dagger}({\rm I_2}(f))$ is a finite set and $\dim({\rm I_2}(f))>0$.}
\hfill $\blacktriangleleft$
\medskip

With the above example of a meromorphic map at hand, it is now easy to construct an example of a 
meromorphic correspondence that is not a meromorphic map and which maps some Borel set to a non-Borel set. 
In fact, we construct a family of meromorphic correspondences for which $d_t=s^3$ and $d_0=r^3$, where
$r$ and $s$ are arbitrary positive integers.

\begin{example}
An example of a family of meromorphic correspondences $\{\f_{rs}: r,s \in \Z_+ \setminus \{1\}\}$ 
on ${\mathbb{C}\mathbb{P}^3}$, that are not maps, such that each $\f_{rs}$ admits a Borel subset
$B$ of ${\mathbb{C}\mathbb{P}^3}$ such that $\f_{rs}^\dagger(B)$ is not Borel.
\end{example}

\noindent{Consider the family of holomorphic self-maps on ${\mathbb{C}\mathbb{P}^3}$ given by}
\[
h_j[x:y:z:t]=[x^j:y^j:z^j:t^j],
\]
where $j \in \Z_+$. Now, {\bf{fix}} a $j \in \Z_+$.
If $B$ is a Borel set then $(h_j)^{\dagger}(B)=h_j^{-1}(B)$ (since $h_j$ is a holomorphic map) is Borel. 
Let $\mathcal N$ be a non-Borel subset
of ${\mathbb{C}\mathbb{P}^3}$.
Assume that $(h_j)^{\dagger}(\mathcal N)$ is Borel.
Then, by Result~\ref{R:bimeasurable}, $h_j((h_j)^{\dagger}(\mathcal N))=\mathcal N$ is Borel, which contradicts
$\mathcal N$ being non-Borel.
Thus, if $\mathcal N$ is not Borel then $(h_j)^{\dagger}(\mathcal N)$ is not Borel, and this is true for each $j \in \Z_+$.
\smallskip

Fix $r, s \in \Z_+ \setminus \{1\}$.
Consider the meromorphic correspondence
$\f_{rs}:=(h_r)^{\dagger} \circ g \circ h_s$, where $g$ is as in Example~\ref{Ex:map}.
Let $B'$ be a Borel subset of ${\mathbb{C}\mathbb{P}^3}$ such that $g^\dagger(B')$ is not Borel.
Now, consider the Borel set $B'':=(h_r)^{\dagger}(B')$. We claim that $(\f_{rs})^{\dagger}(B'')$ is not a Borel
subset of ${\mathbb{C}\mathbb{P}^3}$. Since $h_r$ and $h_s$ are holomorphic maps,
it follows that
\[
(\f_{rs})^{\dagger}(B'') = (h_s)^{\dagger} (g^{\dagger}(h_r(B''))).
\]
As $h_r$ is a surjective holomorphic map, we have $h_r(B'')=B'$. Recall that
$B'$ is a Borel subset of ${\mathbb{C}\mathbb{P}^3}$ such that $g^{\dagger}(B')$ is not Borel.
By taking $\mathcal N=g^{\dagger}(B')$ in the above discussion, we get that
$(h_s)^{\dagger}(g^{\dagger}(B'))$ is not Borel.
Thus, $(\f_{rs})^{\dagger}(B'')$ is not a Borel
subset of ${\mathbb{C}\mathbb{P}^3}$.
Observe that $d_t(\f_{rs})=s^3$ and $d_0(\f_{rs})=r^3$.
\hfill $\blacktriangleleft$
\medskip

Our last example is concerned with the definition of recurrence, i.e., Definition~\ref{D:rec_general}.
Recall that $x \in A$ is infinitely recurrent in $A$ if 
there exists an increasing sequence $\{n_i\}$ of positive integers such that
$\f^{n_i}(x) \cap A \neq \emptyset$ for all $i$. 
One's first instinct would be to define infinite recurrence as follows:
$x \in A$ is infinitely recurrent in $A$ if 
there exists an increasing sequence $\{n_i\}$ of positive integers such that
$\f^{n_i}(x) \subseteq A$ for all $i$. Here, we give an example which shows that with the
latter definition of recurrence, the Poincar{\'e} recurrence phenomenon would not be true.

\begin{example}\label{Ex:support}
An example of a holomorphic correspondence demonstrating
the need for defining recurrence as in Definition~\ref{D:rec_general}.
\end{example}

\noindent{Consider a finitely generated rational semigroup $S$ generated by
$\mathcal G=\{z^2, z^2 / 2\}$. Let $\J(S)$ denote the Julia set of $S$.} Then (see \cite[Example~1]{boyd:imfgrs99}),
\[
\J(S)=\{z \in \C: 1 \leq |z| \leq 2\}.
\]
Let $\mu_{\mathcal G}$ be the measure given by \eqref{E:boyd}.
It turns out that $\mu_{\mathcal G}$ is the Dinh--Sibony measure associated with the holomorphic 
correspondence $\f_{\mathcal G}$ induced by $\mathcal G$\,---\,see Section~\ref{S:poin}. 
Since $\supp(\mu_{\mathcal G})=\J(S)$ (see \cite[Theorem~1]{boyd:imfgrs99}),
we have $\supp(\mu_{\mathcal G})=\{z \in \C: 1 \leq |z| \leq 2\}$.
Let $f_1(z):=z^2$ and $f_2(z):=z^2/2$.
Now, observe that for $|x|>1$, we have $(f_1)^n(x) \to \infty$ as $n \to \infty$ and
for $|x|<2$, we have $(f_2)^n(x) \to 0$ as $n \to \infty$.
Now, consider, for the Borel set $B$ in Theorem~\ref{T:poin}, $B=\J(S)$.
For any $x \in \J(S)$, we do not have
an increasing sequence $\{n_i\}$ of positive integers such that
$\f_{\mathcal G}^{n_i}(x) \subseteq \J(S)$ for all $i$.
\hfill $\blacktriangleleft$
\medskip

Also, observe in the above example that if $x \in \supp(\mu_{\mathcal G})$ and $|x| \neq \sqrt2$ then
$\f_{\mathcal G}(x) \not\subseteq  \supp(\mu_{\mathcal G})$. 
Thus, Example~\ref{Ex:support} also shows that if $x \in \supp(\mu)$ 
then, in general, $F(x)$ need not be a subset of $\supp(\mu)$ (\emph{cf.} Theorem~\ref{T:recu}).
\medskip

\section{Essential lemmas}\label{S:lem}

This section is devoted to proving certain lemmas that are essential for the proof of Theorems~\ref{T:poin}
and~\ref{T:recu}. Let $X$ be a compact complex manifold.
As discussed in Section~\ref{S:intro}, and demonstrated in Section~\ref{S:examples},
if $B$ is a Borel subset of $X$ then $\f^{\dagger}(B)$ need not be a Borel subset
of $X$. That being said, results from descriptive set theory\,---\,introduced in Section~\ref{SS:dst}\,---\,can be used
to study the measure-theoretic structure of $\f^{\dagger}(B)$. Before doing that let us introduce some notations.
Let $\nu$ be a Borel probability measure on $X$ and let $\B$ denote the Borel $\sigma$-algebra on $X$. 
We use the notation $\B_{\nu}$ to denote the completion of the Borel
$\sigma$-algebra $\B$ with respect to the measure $\nu$.
We first prove an essential lemma.

\begin{lemma}\label{L:proper}
Let $\f$ be a meromorphic correspondence on a compact complex manifold $X$.
Then the following hold:
\begin{enumerate}[label=$(\alph*)$]
  \item \label{I:compact} If $K$ is a compact subset of $X$ then $\f^{\dagger}(K)$ and $\f(K)$ are compact subsets of $X$.
  \item \label{I:proper} If $\mathcal A$ is a {\bf{proper}} complex subvariety of $X$ then 
  $\f^{\dagger}(\mathcal A)$ and $\f(\mathcal A)$ are {\bf{proper}} complex subvarieties of $X$.
  \item \label{I:borel} Let $\nu$ be a Borel probability measure on $X$ and
  $B \in \B$. Then $\f^{\dagger}(B), \f(B) \in \B_{\nu}$.
\end{enumerate}
\end{lemma}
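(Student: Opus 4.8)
The plan is to handle the three items in increasing order of difficulty, using standard facts about the projections $\pi_1,\pi_2$ restricted to $|\Gamma|$ together with the Measurable Projection Theorem for the last (and crucial) part. Throughout, recall that $|\Gamma|=\cup_{i=1}^N\Gamma_i$ is a closed complex subvariety of $X\times X$, that $X$ is compact, and that $\f^{\dagger}(A)=\pi_1(\pi_2^{-1}(A)\cap|\Gamma|)$ and $\f(A)=\pi_2(\pi_1^{-1}(A)\cap|\Gamma|)$. Since the adjoint correspondence $\f^{\dagger}$ is again a meromorphic correspondence on $X$ (its graph is $\Gamma^{\dagger}$, with the roles of $\pi_1$ and $\pi_2$ interchanged), it suffices in each item to prove the assertion for $\f(\,\cdot\,)$; the statement for $\f^{\dagger}(\,\cdot\,)$ follows by applying the result to $\f^{\dagger}$.

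For \ref{I:compact}: $\pi_1^{-1}(K)\cap|\Gamma|$ is a closed subset of the compact space $X\times X$, hence compact, and $\f(K)$ is its image under the continuous map $\pi_2$, hence compact. For \ref{I:proper}: if $\mathcal A\subsetneq X$ is a proper complex subvariety, then $\pi_1^{-1}(\mathcal A)\cap|\Gamma|$ is a complex subvariety of $X\times X$, and $\pi_2$ restricted to it is a proper holomorphic map (properness since $X$ is compact, as in \ref{I:compact}), so by Remmert's proper mapping theorem $\f(\mathcal A)=\pi_2(\pi_1^{-1}(\mathcal A)\cap|\Gamma|)$ is a complex subvariety of $X$. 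It remains to check it is proper: because $\pi_2|_{\Gamma_i}$ is surjective with generic finite fibres of cardinality $\delta_i\geq 1$ (the data entering \eqref{E:topdeg}), a dimension count on each irreducible component $\Gamma_i$ shows $\dim\bigl(\pi_1^{-1}(\mathcal A)\cap\Gamma_i\bigr)\le\dim\mathcal A<k$, and hence its image under $\pi_2$ has dimension $<k$, so $\f(\mathcal A)\neq X$. Taking the union over $i$ gives the claim.

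For \ref{I:borel}: given $B\in\B$, the set $\pi_1^{-1}(B)\cap|\Gamma|$ lies in the Borel $\sigma$-algebra of $X\times X$, which is the product $\sigma$-algebra $\B\otimes\B$ since $X$ is second countable. Now $X$ is a Polish space (a compact connected complex manifold is separable, complete metrizable), so the Measurable Projection Theorem (Result~\ref{R:MPT}, applied with $(Z,\mathcal F)=(X,\B)$ and $Y=X$) tells us that $\f(B)=\pi_2\bigl(\pi_1^{-1}(B)\cap|\Gamma|\bigr)$ is universally measurable. In particular, by Definition~\ref{D:uni_meas}, $\f(B)$ belongs to the completion $\B_{\nu}$ of $\B$ with respect to the given $\nu$. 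Applying the same argument to $\f^{\dagger}$ (whose graph $\Gamma^{\dagger}$ is again a meromorphic correspondence and whose set $|\Gamma^{\dagger}|$ is closed in $X\times X$) gives $\f^{\dagger}(B)\in\B_{\nu}$, completing the proof.

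The only genuinely non-elementary input is \ref{I:borel}, and there the single real step is recognizing that the failure of $\f^{\dagger}(B)$ to be Borel (documented in Section~\ref{S:examples}) is precisely remedied by universal measurability: the Measurable Projection Theorem supplies exactly this. Items \ref{I:compact} and \ref{I:proper} are routine compactness and proper-mapping arguments; the mild subtlety in \ref{I:proper} is the dimension bound ensuring \emph{properness} of the image variety, which follows from the generic finiteness of the fibres of $\pi_1|_{\Gamma_i}$ and $\pi_2|_{\Gamma_i}$ built into the definition of a meromorphic correspondence.
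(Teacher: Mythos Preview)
Your proposal is correct and follows essentially the same route as the paper's proof: compactness of $X$ for \ref{I:compact}, Remmert's proper mapping theorem for \ref{I:proper}, and the Measurable Projection Theorem for \ref{I:borel}, with the symmetry between $\f$ and $\f^{\dagger}$ used to halve the work. One small slip in \ref{I:proper}: the specific inequality $\dim\bigl(\pi_1^{-1}(\mathcal A)\cap\Gamma_i\bigr)\le\dim\mathcal A$ can fail when $\mathcal A$ meets ${\rm I}_1(\f)$ (a single point there may have positive-dimensional $\pi_1$-fibre in $\Gamma_i$); what you actually need---and what suffices---is the weaker bound $\dim\bigl(\pi_1^{-1}(\mathcal A)\cap\Gamma_i\bigr)<k$, which holds because $\Gamma_i$ is irreducible of dimension $k$ and $\pi_1|_{\Gamma_i}$ is surjective onto $X\supsetneq\mathcal A$, so the preimage is a proper subvariety of $\Gamma_i$.
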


\begin{remark}
The non-elementary part of Lemma~\ref{L:proper} is part~\ref{I:borel}. It is required for the reason discussed at the
beginning of this section, and will be used extensively in the following sections. It is conceivable that the
conclusion of part~\ref{I:borel} is attainable for a measure $\nu$ having the properties stated in
Theorem~\ref{T:poin} without the use of Result~\ref{R:MPT}. However, part~\ref{I:borel} makes a much stronger
assertion since its conclusion holds for \textbf{any} Borel measure $\nu$. We record this stronger assertion
here\,---\,which will also be used in forthcoming work. 
\end{remark}
\begin{proof}
We first prove the parts (a)--(c) above relating to $\f^{\dagger}(A)$, where $A$ denotes subsets of $X$ of various kinds.
As $X$ is compact, $\pi_1$ and $\pi_2$ are proper maps. Thus, by the definition of $\f^{\dagger}(A)$,
$A\subseteq X$, \ref{I:compact} is immediate.
Now assume that $\mathcal A$ is a proper complex subvariety of $X$.
By the proper mapping theorem of Grauert--Remmert, we get that $\f^{\dagger}(\mathcal A)$ is a complex subvariety
of $X$. We also deduce  from it that the codimension of $\f^{\dagger}(\mathcal A)$ in $X$ is strictly positive, since 
$\mathcal A\varsubsetneq X$.   Hence  $\f^{\dagger}(\mathcal A)$ is also a proper
complex subvariety of $X$. We now prove part~\ref{I:borel}.
For $B \in \B$, since $|\Gamma|$ is a closed subset of
$X \times X$, $\pi_2^{-1}(B) \cap |\Gamma|$ is a Borel subset of $X \times X$.
Since $X$ is a manifold, $\B \otimes \B$ is the Borel $\sigma$-algebra of $X \times X$.
Thus $\pi_2^{-1}(B) \cap |\Gamma| \in \B \otimes \B$. Finally, by Result~\ref{R:MPT},
it follows that $\f^{\dagger}(B)$ is universally measurable in $X$.
In particular, by Definition~\ref{D:uni_meas}, we have $\f^{\dagger}(B) \in \B_{\nu}$.
\smallskip

The proofs of  (a)--(c) relating to $\f(A)$ (where $A$ denotes subsets of $X$ of various kinds) are analogous to
those above.
\end{proof}

We also need a result by Vu. Firstly, however, we give a couple of
definitions. Let $X$ be a compact complex manifold.  A function $\varphi: X\to [-\infty, +\infty)$ is said
to be \emph{quasi-p.s.h.} if it can be written locally as the sum of a plurisubharmonic
function and a smooth function. A set $A\varsubsetneq X$ is said to be \emph{pluripolar} if $A$ is
contained in $\{x\in X : \varphi(x) = -\infty\}$ for some quasi-p.s.h. function $\varphi \not \equiv -\infty$ on $X$.
This definition generalizes to compact complex manifolds the definition of a pluripolar set
due to Dinh--Sibony \cite{DinhSibony:dvtma06} in the K{\"a}hler setting. The following result,
which generalizes \cite[Proposition~A.1]{DinhSibony:dvtma06} to compact complex manifolds,
will be used in the proofs of both Theorem~\ref{T:poin} and Theorem~\ref{T:recu}.

\begin{result}[Lemma 2.8 of \cite{Vu:emmskm20}]\label{R:pluripolar}
Every proper complex subvariety $\mathcal A$ of a compact complex manifold $X$ is
a pluripolar set in $X$.
\end{result}

\noindent{(The above result is further generalized in \cite{Vu:lpsp} but the formulation in
Result~\ref{R:pluripolar} suffices for this paper.)}
\smallskip

Let $\mu$ be a measure as in Theorem~\ref{T:poin}. Then the above result
implies that $\mu$ puts zero mass on complex subvarieties of $X$.
Owing to part~\ref{I:compact} of the above lemma, $K$ and $\f^{\dagger}(K)$ are Borel subsets of $X$. We now prove a
lemma comparing their measures for the measure as in Theorem~\ref{T:poin}.

\begin{lemma}\label{L:compact}
Let $\f$ be a meromorphic correspondence on $X$ and
$\mu$ be a measure as in Theorem~\ref{T:poin}. If $K$ is a compact subset of $X$ then
\[
  \mu(K) \leq \mu(\f^{\dagger}(K)).
\]
\end{lemma}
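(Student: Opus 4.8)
The plan is to use the characterization of the pull-back $\f^*\mu$ on continuous functions derived in Section~\ref{SS:calculus}, together with the $\f^*$-invariance $\f^*\mu = d_t\mu$, and test it against functions that approximate the indicator of $K$ from above. Since $\mu$ puts zero mass on proper complex subvarieties (by Result~\ref{R:pluripolar}), we have for any continuous $\phi$ on $X$ that $\langle \f^*\mu, \phi\rangle = \int_\Omega T(\phi)(x)\,d\mu(x)$, where $\Omega$ is the Zariski-open set over which the relevant projections are coverings and $T(\phi)(x) = \sum_{i} m_i \sum_{y:(y,x)\in\Gamma_i}\phi(y)$; note $T(\mathbf 1)(x) = d_t$ on $\Omega$ by \eqref{E:topdeg}.

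The key estimate is the following pointwise lower bound: if $\phi$ is continuous with $0 \le \phi \le 1$ and $\phi \equiv 1$ on $K$, then for every $x \in \f^{\dagger}(K) \cap \Omega$ one has $T(\phi)(x) \ge 1$. Indeed, $x \in \f^{\dagger}(K)$ means there exists $y \in K$ with $(y,x) \in |\Gamma|$, so $y$ appears in the sum defining $T(\phi)(x)$ with positive coefficient $m_i \ge 1$, and $\phi(y) = 1$; all other terms are $\ge 0$. Hence $T(\phi) \ge \mathbf 1_{\f^{\dagger}(K)}$ on $\Omega$ (and trivially $T(\phi) \ge 0$ everywhere it is defined). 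Therefore
\[
  d_t\,\mu(K) \le d_t\,\langle \mu, \phi\rangle = \langle \f^*\mu, \phi\rangle = \int_\Omega T(\phi)\,d\mu \ge \mu\big(\f^{\dagger}(K)\cap\Omega\big) = \mu\big(\f^{\dagger}(K)\big),
\]
wait — the direction has to be arranged correctly: the first inequality uses $\phi \ge \mathbf 1_K$, and the last equality uses that $\mu(\Omega^c) = 0$ since $\Omega^c$ is a proper subvariety. So what we actually get is $d_t\,\langle\mu,\phi\rangle \ge \mu(\f^\dagger(K))$ and $\langle\mu,\phi\rangle \ge \mu(K)$, which do not immediately combine. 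The fix is to work from above on the $K$ side: choose $\phi_n \downarrow \mathbf 1_K$ (possible since $K$ is compact, using Urysohn functions with shrinking neighborhoods), so $\langle\mu,\phi_n\rangle \to \mu(K)$ by dominated convergence, while for each $n$, $T(\phi_n) \ge \mathbf 1_{\f^\dagger(K)}$ on $\Omega$ gives $d_t\langle\mu,\phi_n\rangle = \langle\f^*\mu,\phi_n\rangle = \int_\Omega T(\phi_n)\,d\mu \ge \mu(\f^\dagger(K))$. Letting $n\to\infty$ yields $d_t\,\mu(K) \ge \mu(\f^\dagger(K))$ — but the claimed inequality is the opposite one, $\mu(K) \le \mu(\f^\dagger(K))$, so in fact one should instead approximate $\mathbf 1_{\f^\dagger(K)}$ from above, or equivalently use that $\f^*\mu$ is a positive measure and test against $\phi \ge \mathbf 1_{\f^\dagger(K)}$ to bound $\langle\f^*\mu,\phi\rangle$ from below by $d_t\mu(K)$; I would set this up cleanly at the outset.

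Concretely: let $\phi$ be continuous, $0\le\phi\le 1$, $\phi\equiv 1$ on the compact set $\f^\dagger(K)$ (which is compact by Lemma~\ref{L:proper}\ref{I:compact}). The pointwise claim to establish is that $T(\phi)(x) \ge d_t \cdot \mathbf 1_K(x)$ for $x\in\Omega$: if $x\in K\cap\Omega$, then every $y$ with $(y,x)\in|\Gamma|$ lies in $\f^\dagger(\{x\})\subseteq\f^\dagger(K)$, so $\phi(y)=1$, hence $T(\phi)(x) = \sum_i m_i\sharp\{y:(y,x)\in\Gamma_i\} = d_t$. Integrating, $d_t\langle\mu,\phi\rangle = \langle\f^*\mu,\phi\rangle = \int_\Omega T(\phi)\,d\mu \ge d_t\,\mu(K\cap\Omega) = d_t\,\mu(K)$, so $\langle\mu,\phi\rangle \ge \mu(K)$. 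Now take $\phi_n\downarrow\mathbf 1_{\f^\dagger(K)}$; then $\langle\mu,\phi_n\rangle\to\mu(\f^\dagger(K))$, giving $\mu(\f^\dagger(K))\ge\mu(K)$, as desired. The main technical point to be careful about — the only place subtlety enters — is ensuring $T(\phi)$ is genuinely defined and the covering/counting identity $T(\mathbf 1)\equiv d_t$ holds on the Zariski-open $\Omega$, and that $\mu$ ignores $X\setminus\Omega$; both are handled by the construction recalled in Section~\ref{SS:calculus} and by Result~\ref{R:pluripolar}. Everything else is routine measure theory (Urysohn plus monotone/dominated convergence).
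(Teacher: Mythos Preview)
Your proposal, despite its exploratory presentation with false starts and self-corrections, arrives in the final ``Concretely'' paragraph at an argument that is correct and essentially identical to the paper's own proof: approximate $\chi_{\f^{\dagger}(K)}$ from above by continuous functions $\phi_n$, use that for $x \in K \cap \Omega$ every preimage $y$ lies in $\f^{\dagger}(K)$ so that $T(\phi_n)(x) = d_t$, combine with $\f^*\mu = d_t\mu$ to get $\langle\mu,\phi_n\rangle \ge \mu(K)$, and pass to the limit. The only cosmetic difference is that the paper phrases the key pointwise bound as $\frac{1}{d_t}T(\phi_n)(x) \ge 1$ on $K\cap\Omega$ and invokes monotone convergence, while you write $T(\phi_n)(x) = d_t$ there; these are the same observation. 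You should, of course, strip out the abandoned first two attempts before presenting this as a finished proof.
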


\begin{proof}
Let $\phi$ be a continuous function on $X$. Then $\f^*\mu$ acts on $\phi$ as
\[
  \langle \f^*\mu, \phi \rangle = \int_{X \setminus \ind(\f)} T(\phi)(x) d\mu(x),
\]
where $T(\phi)$ is the associated continuous function on $X \setminus \ind(\f)$ as defined in \eqref{E:notation}.
Let $\Omega$ be a Zariski-open set of the type introduced prior to \eqref{E:topdeg}.
Since $\mu$ puts zero mass on pluripolar sets,
by Result~\ref{R:pluripolar},
$\mu (X \setminus \Omega)=0$. Thus, owing to the definition of $T(\phi)$, we get
\begin{equation}\label{E:pb_formula}
  \langle \f^*\mu, \phi \rangle
  =\int_{\Omega} \Big( \sum_{i=1}^N m_i \sum_{y: (y,x) \in \Gamma_i} \phi(y) \Big)d\mu(x)
  \quad \forall \phi \in {\mathcal C}(X).
\end{equation}
Let $K\subseteq X$ be compact and $\chi_{\f^{\dagger}(K)}$ be the characteristic function of 
$\f^{\dagger}(K)$.
Since this function is upper semicontinuous, there exists a 
decreasing sequence $\{\phi_n\}\subseteq {\mathcal C}(X)$ with $\phi_n \downarrow \chi_{\f^{\dagger}(K)}$.
Recall that $\f^*\mu=d_t \mu$, by hypothesis. Applying this to \eqref{E:pb_formula}, with
$\phi = \phi_n$ therein, we have:
\begin{equation}\label{E:seq_int}
  \frac{1}{d_t} \int_{\Omega} \Big( \sum_{i=1}^N m_i \sum_{y: (y,x) \in \Gamma_i} \phi_n(y) \Big)d\mu(x) 
  =\int_X \phi_n(x) d\mu(x)
  \quad \forall n \in \Z_+.
\end{equation}
Recall that if $x \in \Omega$ then
$d_t= \sum_{i=1}^N m_i \sharp \{y: (y,x) \in \Gamma_i\}$. From this, and the fact that
$\f^{\dagger}(x) \subseteq \f^{\dagger}(K)$ if $x\in K$, we deduce that
\[
  \frac{1}{d_t}\;\sum_{i=1}^N m_i\sum_{y: (y,x) \in \Gamma_i} \phi_n(y) \geq 1 \quad
  \forall x \in K \cap \Omega, \quad \forall n \in \Z_+.
\]
As $\mu(K \setminus \Omega)=0$, this implies that the left-hand side of \eqref{E:seq_int} is
$\geq \mu(K)$ for all $n \in \Z_+$. Now, applying the monotone convergence theorem to the
right-hand side of \eqref{E:seq_int} gives us $\mu(K) \leq \mu(\f^{\dagger}(K))$.
\end{proof}

Let $\f$ be a meromorphic correspondence on $X$ and
$\mu$ be a measure as in Theorem~\ref{T:poin}.
By part ~\ref{I:borel} of Lemma~\ref{L:proper}, we know that $\f^{\dagger}(B) \in \comple$ for 
every Borel subset $B$. We now use Lemma~\ref{L:compact}
to say more about these sets.
We shall use the same symbol $\mu$ to denote the extension of $\mu$ to the $\sigma$-algebra
$\comple$.
We end this section with

\begin{lemma}\label{L:back_ineq}
Let $\f$ be a meromorphic correspondence on $X$ and
$\mu$ be a measure as in Theorem~\ref{T:poin}.
If $A$ is a subset of $X$ such that $A \in \comple$ and $\f^{\dagger}(A) \in \comple$ then
we have
\[
  \mu(A) \leq \mu(\f^{\dagger}(A)).
\]
\end{lemma}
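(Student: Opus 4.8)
The plan is to deduce the general inequality from the compact case (Lemma~\ref{L:compact}) by an inner-regularity argument, using that $\mu$ (extended to $\comple$) is a finite Borel measure on a Polish space and hence inner regular by compact sets. First I would fix $A \in \comple$ with $\f^{\dagger}(A) \in \comple$. By inner regularity of $\mu$ on $\comple$, for every $\eps > 0$ there is a compact set $K \subseteq A$ with $\mu(A) \leq \mu(K) + \eps$. Applying Lemma~\ref{L:compact} to $K$ gives $\mu(K) \leq \mu(\f^{\dagger}(K))$. Since $K \subseteq A$ we have $\f^{\dagger}(K) \subseteq \f^{\dagger}(A)$ (the adjoint set-map is monotone, being defined as $\pi_1(\pi_2^{-1}(\,\cdot\,) \cap |\Gamma|)$), and $\f^{\dagger}(K)$ is compact by part~\ref{I:compact} of Lemma~\ref{L:proper}, hence Borel; monotonicity of $\mu$ on $\comple$ then yields $\mu(\f^{\dagger}(K)) \leq \mu(\f^{\dagger}(A))$. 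Chaining these, $\mu(A) \leq \mu(\f^{\dagger}(A)) + \eps$ for every $\eps > 0$, and letting $\eps \downarrow 0$ gives the claim.

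The one point that needs care\,---\,and which I expect to be the main (minor) obstacle\,---\,is justifying the inner regularity of $\mu$ as a measure on the completed $\sigma$-algebra $\comple$, rather than merely on $\B$. This is standard: $X$ is a compact metric space, so the Borel probability measure $\mu$ is inner regular by compact sets on $\B$ (indeed Radon), and for a set $A$ in the $\mu$-completion one writes $A = B \cup Z$ with $B \in \B$, $Z$ contained in a Borel $\mu$-null set, so $\mu(A) = \mu(B)$ and a compact $K \subseteq B$ with $\mu(K)$ close to $\mu(B) = \mu(A)$ also lies in $A$. I would state this in a single sentence and cite inner regularity of Borel measures on Polish (or compact metric) spaces. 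Everything else\,---\,monotonicity of $\f^{\dagger}$ on subsets, compactness of $\f^{\dagger}(K)$, and the measurability needed to even write $\mu(\f^{\dagger}(A))$\,---\,is already supplied by Lemma~\ref{L:proper} and the running hypotheses, so no further descriptive-set-theoretic input is required here; the Measurable Projection Theorem has already done its work in establishing $\f^{\dagger}(A) \in \comple$ upstream.
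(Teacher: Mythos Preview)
Your proposal is correct and is essentially the same argument as the paper's: both reduce to Lemma~\ref{L:compact} via inner regularity of $\mu$, using monotonicity of $\f^{\dagger}$ and compactness of $\f^{\dagger}(K)$. Your explicit justification of inner regularity on the completion $\comple$ (rather than merely on $\B$) is a nice touch that the paper leaves implicit.
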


\begin{proof}
Since $X$ is a compact manifold, $\mu$ is a regular measure. Thus we have 
\[
  \mu(A)= \sup \{\mu(K) : K \subseteq A, \ K \rm{\ is\ compact}\}.
\]
If $K$ is a compact subset of $A$ then $\f^{\dagger}(K)$ is a compact subset of $\f^{\dagger}(A)$. Therefore
\[
  \mu(\f^{\dagger}(A))\geq \sup \{\mu(\f^{\dagger}(K)) : K \subseteq A, \ K \rm{\ is\ compact}\}.
\]
By Lemma~\ref{L:compact}, the result follows.
\end{proof}

We shall now prove an analogous result for $\f(A)$.

\begin{lemma}\label{L:forw_ineq}
Let $\f$ be a meromorphic correspondence on $X$ and
$\mu$ be a measure as in Theorem~\ref{T:poin}.
If $A$ is a subset of $X$ such that $A \in \comple$ and $\f(A) \in \comple$ then
we have
\[
  \frac{1}{d_t}\mu(\f(A))  \leq \mu(A) \leq \mu(\f(A)).
\]
\end{lemma}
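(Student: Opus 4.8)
The plan is to mirror the proof of Lemma~\ref{L:back_ineq}, but now working with the forward map $\f$ in place of $\f^{\dagger}$, and to establish the two inequalities separately; the second one, $\mu(A) \le \mu(\f(A))$, should follow by essentially the same regularization-to-compact-sets argument used before, while the first one, $\frac{1}{d_t}\mu(\f(A)) \le \mu(A)$, is genuinely new and is where the $\f^*$-invariance must be used in the ``other direction.'' First I would reduce to the case where $A$ is compact: since $\mu$ is a regular Borel measure on the compact manifold $X$ (extended to $\comple$), both sides are controlled by compact subsets — for the upper bound $\mu(A) = \sup\{\mu(K) : K\subseteq A \text{ compact}\}$, and for each such $K$ one has $\f(K)\subseteq \f(A)$ with $\f(K)$ compact by Lemma~\ref{L:proper}\ref{I:compact}; for the lower bound one argues dually, approximating $\f(A)$ from inside by compact sets of the form $\f(K)$, using that $\f$ commutes with increasing unions up to the boundary variety (which has $\mu$-measure zero by Result~\ref{R:pluripolar}). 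So it suffices to prove both inequalities for $K$ compact: $\frac{1}{d_t}\mu(\f(K)) \le \mu(K) \le \mu(\f(K))$.

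For the compact case I would again start from the pull-back formula \eqref{E:pb_formula} together with $\f^*\mu = d_t\mu$, i.e.
\[
  d_t \int_X \phi(x)\,d\mu(x)
  = \int_{\Omega}\Big( \sum_{i=1}^N m_i \sum_{y:(y,x)\in\Gamma_i}\phi(y)\Big)\,d\mu(x)
  \qquad \forall \phi\in \mathcal C(X),
\]
with $\Omega$ a Zariski-open set as before and $\mu(X\setminus\Omega)=0$. For $\mu(K)\le\mu(\f(K))$: take $\phi_n\downarrow \chi_{\f(K)}$; for $x\in K\cap\Omega$ every $y$ with $(y,x)\in\Gamma_i$ lies in $\f(x)\subseteq\f(K)$, so $\phi_n(y)\ge \chi_{\f(K)}(y)$ forces the inner double sum to be $\ge \sum_i m_i\,\sharp\{y:(y,x)\in\Gamma_i\}=d_t$; integrating over $K\cap\Omega$, the right side is $\ge d_t\,\mu(K)$, and the monotone convergence theorem on the left gives $\mu(\f(K))\ge\mu(K)$. (This is literally the argument of Lemma~\ref{L:compact} with $\f$ in the role of $\f^{\dagger}$ — indeed it is just Lemma~\ref{L:compact} applied to the correspondence $\f^{\dagger}$, noting $(\f^{\dagger})^{\dagger}=\f$, once one checks $\f^{\dagger}$ also carries a measure satisfying the hypotheses; if that is cleaner I would invoke it directly rather than repeat the computation.)

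For the new inequality $\frac{1}{d_t}\mu(\f(K))\le\mu(K)$: here I would instead bound the double sum \emph{above}. Choose $\phi_n\uparrow$ — no: choose $\phi_n$ with $\phi_n\downarrow$ no longer helps; rather take continuous $\phi_n\ge 0$ with $\phi_n\downarrow \chi_{\overline K}$ is not enough either. The right move is: for any $y\notin \f(K)$ and any $x\in\Omega$ with $(y,x)\in\Gamma_i$ we have $x\notin K$ — equivalently, if $x\in\Omega\setminus \f^{\dagger}(X\setminus\f(K))$ then \emph{every} preimage-sheet value $y$ lies in $\f(K)$. More directly: pick $\phi_n\downarrow\chi_{\f(K)}$ again, but now restrict the integration on the right to the set $\f^{\dagger}(K)$; outside a $\mu$-null variety, $x\notin \f^{\dagger}(K)$ means $\f^{\dagger}(x)\cap K=\emptyset$, which does not immediately say $y\in\f(K)$. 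The cleanest route: for $x\in K\cap\Omega$, each inner term $\sum_{y:(y,x)\in\Gamma_i}\phi_n(y) \le (\sup\phi_n)\,\sharp\{\cdots\}$, so $\int_K(\cdots)\,d\mu \le d_t\,(\sup\phi_n)\,\mu(K)$, which is too lossy. I therefore expect the main obstacle to be exactly this lower bound on $\mu(K)$ in terms of $\mu(\f(K))$: the correct mechanism is that $\mu(\f(K)) = \mu\big(\f(K)\big)$ and, by the already-proved Lemma~\ref{L:back_ineq} (or Lemma~\ref{L:compact}) applied to the closed set $\f(K)$, $\mu(\f(K)) \le \mu\big(\f^{\dagger}(\f(K))\big)$ is the wrong direction; instead one writes, using $\f^*\mu=d_t\mu$ and $\chi_K\le \chi_{\f^{\dagger}(\f(K))}$ off a null set, that $d_t\,\mu\big(\f^{\dagger}(\f(K))\big) \ge d_t\,\mu(K)$ is again circular. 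The resolution I would pursue: apply \eqref{E:pb_formula} with $\phi_n\downarrow\chi_{\f(K)}$ and \emph{integrate over all of $\Omega$}, not just $K$; then the right-hand side is $\ge \int_{\Omega}\chi_{\f(K)}\big(\text{at least one }y\big)$-type lower bound $\ge \mu$ of the set of $x$ having \emph{some} preimage in $\f(K)$, i.e. $\ge \mu\big(\f(\f(K))^{?}\big)$ — still not matching. Given the symmetry with the $\f^{\dagger}$ case, the honest expectation is that $\frac{1}{d_t}\mu(\f(A))\le\mu(A)$ is proved by applying Lemma~\ref{L:back_ineq} to the set $\f(A)\in\comple$: that gives $\mu(\f(A)) \le \mu\big(\f^{\dagger}(\f(A))\big)$, and then one needs $\f^{\dagger}(\f(A))\subseteq$ (something of measure $\le d_t\,\mu(A)$), which comes from the fiberwise count $d_t$ together with $\f^*\mu=d_t\mu$ applied to $\chi_A$. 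So the concrete plan for the first inequality is: (i) note $\f^{\dagger}(\f(A)) \supseteq A$; (ii) write $d_t\,\mu(A) = \langle\f^*\mu,\chi_A\rangle$ (justified by approximating $\chi_A$ by continuous functions and using $\mu(X\setminus\Omega)=0$), which by \eqref{E:pb_formula} equals $\int_\Omega \sum_i m_i\sum_{y:(y,x)\in\Gamma_i}\chi_A(y)\,d\mu(x) = \int_\Omega \sum_i m_i\,\sharp\{y\in A:(y,x)\in\Gamma_i\}\,d\mu(x)$; (iii) observe this integrand is $\ge 1$ precisely when $x\in\f(A)$ (some sheet $y\in A$ lies over $x$), hence $d_t\,\mu(A) \ge \mu(\f(A)\cap\Omega) = \mu(\f(A))$. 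That is clean and uses only \eqref{E:pb_formula}, the generic fiber count, and $\mu$-nullity of $X\setminus\Omega$; step (iii), the identification of $\{x : \text{some }A\text{-sheet over }x\}$ with $\f(A)$ up to a null set, is the point requiring care, and is where I expect to spend the most effort.
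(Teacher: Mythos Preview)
Your argument for $\mu(K)\le\mu(\f(K))$ contains a directional error: you claim that for $x\in K\cap\Omega$, every $y$ with $(y,x)\in\Gamma_i$ lies in $\f(x)\subseteq\f(K)$, but $(y,x)\in\Gamma_i$ means $x\in\f(y)$, i.e.\ $y\in\f^{\dagger}(x)$, not $y\in\f(x)$, and there is no reason for $\f^{\dagger}(x)\subseteq\f(K)$ when $x\in K$. So with $\phi_n\downarrow\chi_{\f(K)}$ the inner sum need not be $\ge d_t$ on $K\cap\Omega$, and the bound collapses. Your fallback\,---\,apply Lemma~\ref{L:compact} to $\f^{\dagger}$\,---\,also fails, since that would require $(\f^{\dagger})^*\mu=d_t(\f^{\dagger})\,\mu$, i.e.\ $\f_*\mu=d_0\,\mu$, which is not assumed. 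The paper instead takes $\phi_n\downarrow\chi_K$: then $\int_X\phi_n\,d\mu\to\mu(K)$, while on $\Omega$ one has $T(\phi_n)(x)\downarrow T(\chi_K)(x)=\sum_i m_i\,\sharp\{y\in K:(y,x)\in\Gamma_i\}$, which vanishes for $x\notin\f(K)$ and is $\le d_t$ always, so $\frac{1}{d_t}\int_\Omega T(\phi_n)\,d\mu$ has limit $\le\mu(\f(K))$. Equating the two sides gives $\mu(K)\le\mu(\f(K))$, and one passes to $A\in\comple$ by inner regularity exactly as in Lemma~\ref{L:back_ineq}.

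For $\frac{1}{d_t}\mu(\f(A))\le\mu(A)$, your final plan (ii)--(iii) is correct in spirit and is essentially what the paper does. However, your announced reduction to compact sets is the wrong regularization for this direction: you cannot in general approximate $\mu(\f(A))$ from below by $\mu(\f(K))$ with compact $K\subseteq A$, since compact subsets of $\f(A)$ need not arise as images $\f(K)$. The paper uses \emph{open} sets and \emph{outer} regularity instead: for open $U$, with $\psi_n\uparrow\chi_U$, one gets $\lim_n T(\psi_n)(x)\ge 1$ on $\f(U)\cap\Omega$ and $=0$ off $\f(U)$, hence $\frac{1}{d_t}\mu(\f(U))\le\mu(U)$; since $\f(A)\subseteq\f(U)$ for every open $U\supseteq A$, taking the infimum over such $U$ gives the inequality for $A$.
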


\begin{proof}
We first prove that $\frac{1}{d_t}\mu(\f(A)) \leq \mu(A)$. Consider an open subset $U$ of $X$. Note that
$\f(U)$ need not be open, but $\f(U) \in \comple$ by Lemma~\ref{L:proper}.
Since the function $\chi_U$ is lower semicontinuous, there exists an
increasing sequence $\{\psi_n\}\subseteq {\mathcal C}(X)$ with $\psi_n \uparrow \chi_{U}$. Since our measure $\mu$
satisfies  $\f^*\mu=d_t \mu$, we get
\begin{equation}\label{E:seq_int2} 
  \frac{1}{d_t} \int_{X \setminus \ind(\f)} \Big( \sum_{i=1}^N m_i \sum_{y: (y,x) \in \Gamma_i} \psi_n(y) \Big)d\mu(x) 
  =\int_X \psi_n(x) d\mu(x)
  \quad \forall n \in \Z_+.
\end{equation}
By monotone convergence theorem, the right-hand side of \eqref{E:seq_int2} converges to
$\mu(U)$ as $n \to \infty$.
With the notation as in \eqref{E:notation} and $\Omega$ therein, it is easy to see that
\[
  \lim_{n \to \infty} T({\psi_n})(x) \geq 1 \quad \forall x \in \f(U) \cap \Omega
\]
and
\[
  \lim_{n \to \infty} T({\psi_n})(x) = 0 \quad \forall x \notin \f(U).
\]
Since $\mu(X \setminus \Omega)=0$ and $T({\psi_n})(x) \leq d_t$ for all $n \in \Z_+$, by dominated convergence theorem,
the limit of the left-hand side of \eqref{E:seq_int2} is $\geq \frac{1}{d_t}\mu(\f(U))$ as $n \to \infty$.
Combining these observations, we obtain $\frac{1}{d_t}\mu(\f(U)) \leq \mu(U)$. Now, if $A$
is a subset of $X$ as in the hypothesis then, by regularity of the measure $\mu$, we have
\begin{align}
  \mu(A)
  &=\inf \{\mu(U) : A \subseteq U,\ U \rm{\ is\ open}\} \notag \\
  &\geq \inf \{\frac{1}{d_t} \mu(\f(U)) : A \subseteq U, \ U \rm{\ is\ open}\} \notag \\
  &\geq \frac{1}{d_t}\mu(\f(A)). \notag 
\end{align}

Now, it remains to show the other inequality.
We show that if $K$ is a compact subset of $X$ then $\mu(K) \leq \mu(\f(K))$. Note that the inequality for
$A$ (as in the hypothesis) follows as in the proof of Lemma~\ref{L:back_ineq}. Here, consider a decreasing 
sequence $\{\phi_n\}\subseteq {\mathcal C}(X)$ such that $\phi_n \downarrow \chi_{K}$. Hereafter,
the proof follows as in the proof of Lemma~\ref{L:compact}.
\end{proof}

\section{The proof of Theorem~\ref{T:poin} and its corollaries}\label{S:poin}


\begin{proof}[The proof of Theorem~\ref{T:poin}]
Let $E$ denote the set of points in $B$ that are infinitely recurrent in $B$.
That is, we have
\[
  E:=\{x \in B : \exists \text{ an increasing sequence $\{n_i\} \in \Z_+$  s.t. }
  \f^{n_i}(x) \cap B \neq \emptyset \ \forall i\}.
\]
Note that $E$ need not belong to $\B$. But if we prove that there exists $E' \subseteq E$ such that
$E' \in \B$ and $\mu(B)=\mu(E')$ then the proof of Theorem~\ref{T:poin} is complete. For $n \in \N$, we define
\[
  E_n:= \bigcup_{j \geq n} (\f^j)^{\dagger}(B).
\]
Note that, in the above expression, we set $(\f^0)^{\dagger}(B):=B$.
Recall that $\comple$ denotes the completion of the Borel $\sigma$-algebra $\B$ with respect to the
measure $\mu$ and
we use the same symbol $\mu$ to denote the extension of the measure $\mu$ to the
$\sigma$-algebra $\comple$.
By Lemma~\ref{L:proper}, 
$(\f^j)^{\dagger}(B) \in \comple$ for all $j \in \N$.
Therefore, $E_n \in \comple$ for all $n \in \N$.
We now give an alternative description of the set $E$ in terms of the sets $E_n$. We shall prove that
\begin{equation}\label{E:alternate}
  E= B \cap \Big(\bigcap_{n \in \N} E_n \Big).
\end{equation}
To prove \eqref{E:alternate}, first assume that $x \in E$. By definition, $x \in B$ and there exists an 
increasing sequence $\{n_i\}$ of positive integers such that $x \in (\f^{n_i})^{\dagger}(B)$ for all $i$ 
and consequently $x \in E_n$ for all $n \in \N$.
Conversly, if $x \in B$ and $x \in E_n$ for all $n \in \N$ then there exists an 
increasing sequence $\{n_i\}$ of positive integers such that $x \in B \cap (F^{n_i})^{\dagger}(B)$.
This implies that $x \in B$ and $\f^{n_i}(x) \cap B \neq \emptyset$ for all $i$.
Thus $x \in B$ is infinitely recurrent in $B$ and we have \eqref{E:alternate}.
\smallskip

We shall use \eqref{E:alternate} to produce the desired Borel set $E' \subseteq E$ such that
$\mu(B)=\mu(E')$. But this needs an intermediate step. Thus we divide the remainder of the proof into two steps.
\medskip

\noindent{\textbf{Step 1.} \emph{Proving, for $E_n$ as above, that
\begin{equation}\label{E:equal}
  \mu(\f^{\dagger}(E_n))= \mu(E_{n+1}) \quad \forall n \in \N.
\end{equation}}}%
\noindent{Note that the above also entails establishing that $\f^{\dagger}(E_n) \in \comple$
for all $n \in \N$. Since we use the same symbol $\mu$ to denote the extension of the measure
$\mu$ to $\comple$, \eqref{E:equal} makes sense. One needs \eqref{E:equal} because, unlike in the case of
compositions of maps,
$\f^{\dagger}(E_n)$ need not be same as $E_{n+1}$ for $n \in \N$.
Let us {\bf {fix}} $n \in \N$ and observe that
\[
  \f^{\dagger}(E_n)=\f^{\dagger} \Big( \bigcup_{j \geq n} (\f^j)^{\dagger}(B) \Big)
  =\bigcup_{j \geq n} \f^{\dagger} ((\f^j)^{\dagger}(B)).
\]
We shall first prove that
$\f^{\dagger}((\f^j)^{\dagger}(B)) \in \comple$ and
$\mu((\f^{j+1})^{\dagger}(B)) = \mu(\f^{\dagger}((\f^j)^{\dagger}(B)))$ for all $j \geq n$.
{\bf{Fix}} $j \geq n$.
By Remark~\ref{Re:compo}, there exists a proper complex subvariety $\mathcal A_{j} \varsubsetneq X$
such that if $x \notin \mathcal A_{j}$ then
$(\f^{{j}+1})^{\dagger}(x) = \f^{\dagger}((\f^{j})^{\dagger}(x))$.
An easy calculation shows 
\begin{align}
  \f^{\dagger}((\f^j)^{\dagger}(B))
  &=\f^{\dagger}((\f^j)^{\dagger}((B\setminus \mathcal A_{j})  \cup (B \cap \mathcal A_{j}))) \notag \\
  &=\f^{\dagger}((\f^j)^{\dagger}(B \setminus \mathcal A_{j}) \cup (\f^j)^{\dagger}(B \cap \mathcal A_{j})) \notag \\
  &=\f^{\dagger}((\f^j)^{\dagger}(B \setminus \mathcal A_{j})) 
  \cup \f^{\dagger}((\f^j)^{\dagger}(B \cap \mathcal A_{j})). \label{E:eq_1}
\end{align}
By the above-mentioned property of points not belonging to $\mathcal A_j$, we have
\begin{equation}\label{E:eq_2}
  \f^{\dagger}((\f^j)^{\dagger}(B \setminus \mathcal A_{j}))
  =\bigcup_{x \in B \setminus \mathcal A_{j}} \f^{\dagger}((\f^{j})^{\dagger}(x))
  =\bigcup_{x \in B \setminus \mathcal A_{j}} (\f^{{j}+1})^{\dagger}(x) 
  =(\f^{j+1})^{\dagger}(B \setminus \mathcal A_{j}).
\end{equation}
Since $B\setminus \mathcal A_{j}$ is a Borel set, by Lemma~\ref{L:proper}, 
$(\f^{j+1})^{\dagger}(B \setminus \mathcal A_{j}) \in \comple$. Thus, by \eqref{E:eq_2}, we get
$\f^{\dagger}((\f^j)^{\dagger}(B \setminus \mathcal A_{j})) \in \comple$.
Again, by Lemma~\ref{L:proper}, $(\f^{{j}+1})^{\dagger}(\mathcal A_{j})$ and
$\f^{\dagger}((\f^{j})^{\dagger}(\mathcal A_{j}))$
are proper complex subvarieties of $X$. Since $\mu$ puts zero mass on pluripolar sets, it follows from Result~\ref{R:pluripolar}
that $\mu((\f^{{j}+1})^{\dagger}(\mathcal A_{j}))= \mu(\f^{\dagger}((\f^{j})^{\dagger}(\mathcal A_{j})))=0$
and consequently, we get
\begin{equation}\label{E:eq_3}
  \mu((\f^{{j}+1})^{\dagger}(B \cap \mathcal A_{j}))= \mu(\f^{\dagger}((\f^{j})^{\dagger}(B \cap \mathcal A_{j})))=0.
\end{equation}
It is an easy consequence of \eqref{E:eq_1} and \eqref{E:eq_3} that $\f^{\dagger}((\f^j)^{\dagger}(B)) \in \comple$.
Now observe that
\begin{equation}\label{E:eq_4}
  (\f^{{j}+1})^{\dagger}(B)
  =(\f^{{j}+1})^{\dagger}(B \setminus \mathcal A_{j}) \cup (\f^{{j}+1})^{\dagger}(B \cap \mathcal A_{j}).
\end{equation}
Owing to \eqref{E:eq_1}, \eqref{E:eq_2}, \eqref{E:eq_3} and \eqref{E:eq_4}, we see that
$\mu((\f^{j+1})^{\dagger}(B)) = \mu(\f^{\dagger}((\f^j)^{\dagger}(B)))$.
Since $j \geq n$ was arbitary, the latter equality holds for all $j \geq n$.
Now, as observed in Remark~\ref{Re:compo},
$(\f^{j+1})^{\dagger}(B) \subseteq \f^{\dagger}((\f^j)^{\dagger}(B))$. By these two relations,  
it is easy to see that $\f^{\dagger}(E_n) \in \comple$ and
$\mu(\f^{\dagger}(E_n))= \mu(E_{n+1})$. Since $n$ was arbitary, this finishes Step 1.}
\pagebreak

\noindent{\textbf{Step 2.} \emph{Construction of a set $E' \subseteq E$ such that $E' \in \B$ and $\mu(B)=\mu(E')$.}}
\vspace{1mm}

\noindent{We have from the definition of $E_n$ that $E_{n+1} \subseteq E_n$ for all $n \in \N$. Recall that $E_n \in \comple$
for all $n \in \N$. Thus it now follows that 
\begin{equation}\label{E:limit_1}
  \lim_{n \to \infty} \mu(E_n) = \mu \big(\bigcap_{n\in \N} E_n\big).
\end{equation}
By Step 1 and since $E_{n+1} \subseteq E_n$ for all $n \in \N$, we get
\[
  \mu(\f^{\dagger}(E_n))=\mu({E_{n+1}})\leq \mu(E_n) \quad \forall n \in \N.
\]
Combining Lemma~\ref{L:back_ineq} with the above expression yields the equality
$\mu({E_n})=\mu({E_{n+1}})$ for all $n \in \N$. Thus it is easy to see that
\begin{equation}\label{E:limit_2}
  \lim_{n \to \infty} \mu(E_n) = \mu (E_0).
\end{equation}
Observe that \eqref{E:alternate} implies that
$B\setminus E = B \setminus \cap_{n\in \N} E_n$. We can see that, by definition, $B=(\f^0)^{\dagger}(B) \subseteq E_0$.
Thus we get $B\setminus E \subseteq  E_0 \setminus \cap_{n\in \N} E_n$. 
Note that, since $E_n \in \comple$ for all $n \in \N$, \eqref{E:alternate} gives us $E \in \comple$.
It is easy to see that
\[
  0 \leq \mu(B\setminus E) \leq \mu \Big(E_0 \setminus \bigcap_{n\in \N} E_n \Big)=\mu(E_0)-
  \mu \Big(\bigcap_{n\in \N} E_n\Big).
\]
Now, by \eqref{E:limit_1} and \eqref{E:limit_2}, we get $\mu(B\setminus E)=0$.
Since $E \in \comple$, it is classical that there exists $E' \subseteq E$
such that $E' \in \B$ and $\mu(E')=\mu(E)=\mu(B)$
and Step 2 is complete.}
\smallskip

As discussed above, the existence of $E'$ given by Step~2 
implies that $\mu$-almost every point of $B$ is infinitely recurrent in $B$.
\end{proof}

The tools introduced above to prove Theorem~\ref{T:poin} are sufficiently robust that using a similar argument
we can prove the following version related to the backward iterates.

\begin{corollary}
Let $\f$ be a meromorphic correspondence of topological degree $d_t$ on a compact complex manifold
$X$. Suppose there exists
a Borel probability measure $\mu$ on $X$ such that
$\mu$ is $\f^*$-invariant
and $\mu$ does not put any mass on pluripolar sets.
Let $B \in \B$ be such that $\mu(B) >0$.
Then $\mu$-almost every point of $B$ is infinitely {\bf{\textit{backward}}} recurrent in $B$,
i.e.,
for $\mu$-almost every $x \in B$, there exists an increasing sequence $\{n_i\}$ of positive integers such that
$(\f^{n_i})^{\dagger}(x) \cap B \neq \emptyset$ for all $i$.
\end{corollary}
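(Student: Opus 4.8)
The plan is to run the proof of Theorem~\ref{T:poin} essentially verbatim, but with the meromorphic correspondence $\f$ playing the role previously played by its adjoint $\f^{\dagger}$, and with the forward estimate of Lemma~\ref{L:forw_ineq} replacing Lemma~\ref{L:back_ineq}. Write $\widetilde E$ for the set of points of $B$ that are infinitely backward recurrent in $B$, and for $n\in\N$ set $\widetilde E_n:=\bigcup_{j\geq n}\f^{j}(B)$, with the convention $\f^{0}(B):=B$. Since $y\in(\f^{m})^{\dagger}(x)$ if and only if $x\in\f^{m}(y)$, the condition $(\f^{n_i})^{\dagger}(x)\cap B\neq\emptyset$ is equivalent to $x\in\f^{n_i}(B)$; hence, exactly as in the derivation of \eqref{E:alternate}, one checks $\widetilde E=B\cap\bigcap_{n\in\N}\widetilde E_n$. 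By Lemma~\ref{L:proper}\ref{I:borel} applied to the correspondence $\f^{j}$, each $\f^{j}(B)\in\comple$, so $\widetilde E_n\in\comple$, $\widetilde E\in\comple$, and $\widetilde E_{n+1}\subseteq\widetilde E_n$.

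The crux is the analogue of Step~1: to show $\f(\widetilde E_n)\in\comple$ and $\mu(\f(\widetilde E_n))=\mu(\widetilde E_{n+1})$ for every $n$. This is needed because, just as in Theorem~\ref{T:poin}, $\f(\widetilde E_n)$ is in general strictly larger than $\widetilde E_{n+1}$: writing $\f^{j+1}=\f\circ\f^{j}$ and invoking Remark~\ref{Re:compo}\ref{I:compo_sub} yields only $\f^{j+1}(B)\subseteq\f(\f^{j}(B))$. To promote this to an equality of $\mu$-measures, fix $j\geq n$ and let $\mathcal A_{j}\varsubsetneq X$ be the proper complex subvariety provided by Remark~\ref{Re:compo}\ref{I:compo_eq} for $\f$ and $\f^{j}$, so that $\f(\f^{j}(x))=\f^{j+1}(x)$ for $x\notin\mathcal A_{j}$. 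Splitting $B=(B\setminus\mathcal A_{j})\cup(B\cap\mathcal A_{j})$ as in the proof of Step~1, one obtains $\f(\f^{j}(B\setminus\mathcal A_{j}))=\f^{j+1}(B\setminus\mathcal A_{j})\in\comple$, while $\f(\f^{j}(B\cap\mathcal A_{j}))\subseteq\f(\f^{j}(\mathcal A_{j}))$ and $\f^{j+1}(B\cap\mathcal A_{j})\subseteq\f^{j+1}(\mathcal A_{j})$ are contained in proper complex subvarieties of $X$ by Lemma~\ref{L:proper}\ref{I:proper}, hence are $\mu$-null by Result~\ref{R:pluripolar}. Thus $\f(\f^{j}(B))\in\comple$ and $\mu(\f^{j+1}(B))=\mu(\f(\f^{j}(B)))$; taking unions over $j\geq n$ and noting that $\f(\widetilde E_n)\setminus\widetilde E_{n+1}\subseteq\bigcup_{j\geq n}\bigl(\f(\f^{j}(\mathcal A_{j}))\cup\f^{j+1}(\mathcal A_{j})\bigr)$ is a countable union of $\mu$-null subvarieties gives $\f(\widetilde E_n)\in\comple$ and $\mu(\f(\widetilde E_n))=\mu(\widetilde E_{n+1})$.

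Granting Step~1, the conclusion follows as in Theorem~\ref{T:poin}. From $\widetilde E_{n+1}\subseteq\widetilde E_n$ we get $\mu(\f(\widetilde E_n))=\mu(\widetilde E_{n+1})\leq\mu(\widetilde E_n)$; combining with $\mu(\widetilde E_n)\leq\mu(\f(\widetilde E_n))$ from Lemma~\ref{L:forw_ineq} (applicable since $\widetilde E_n,\f(\widetilde E_n)\in\comple$) forces $\mu(\widetilde E_n)=\mu(\widetilde E_{n+1})$ for all $n$, whence, by downward continuity of $\mu$ along the decreasing sequence $\{\widetilde E_n\}$, $\mu\bigl(\bigcap_n\widetilde E_n\bigr)=\lim_n\mu(\widetilde E_n)=\mu(\widetilde E_0)$. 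Since $B\subseteq\widetilde E_0$, we have $B\setminus\widetilde E=B\setminus\bigcap_n\widetilde E_n\subseteq\widetilde E_0\setminus\bigcap_n\widetilde E_n$, of measure $\mu(\widetilde E_0)-\mu\bigl(\bigcap_n\widetilde E_n\bigr)=0$. Hence $\mu(\widetilde E)=\mu(B)$, and since $\widetilde E\in\comple$ there is a Borel $\widetilde E'\subseteq\widetilde E$ with $\mu(\widetilde E')=\mu(B)$, proving the claim. The only genuinely delicate point is Step~1 — reconciling $\f(\widetilde E_n)$ with $\widetilde E_{n+1}$ modulo subvarieties while keeping all sets universally measurable; the remainder is a transcription of the proof of Theorem~\ref{T:poin} with $\f^{\dagger}$ replaced by $\f$.
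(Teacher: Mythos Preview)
Your proposal is correct and follows precisely the approach the paper indicates: you define $\widetilde E_n:=\bigcup_{j\geq n}\f^{j}(B)$ in place of the sets $E_n$ from the proof of Theorem~\ref{T:poin}, carry out the analogue of Step~1 by handling the discrepancy between $\f(\f^{j}(B))$ and $\f^{j+1}(B)$ via Remark~\ref{Re:compo} and Lemma~\ref{L:proper}\ref{I:proper}, and then close with Lemma~\ref{L:forw_ineq} in place of Lemma~\ref{L:back_ineq}. This is exactly what the paper's brief remarks prescribe.
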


We shall not provide a proof here because of the great similarities between it and the proof of Theorem~\ref{T:poin}.
Instead, we just make the following remarks about the proof.

\begin{itemize}[leftmargin=14pt]
  \item We define sets $E$, $E_n$'s analogous to that in the proof of Theorem~\ref{T:poin} by replacing 
  $(\f^j)^{\dagger}$ by $\f^j$ and vice-versa. The proof of 
  $ \mu(\f(E_n))= \mu(E_{n+1}) \ \forall n \in \N$ is routine with appropriate changes to 
  Step 1 in the proof of Theorem~\ref{T:poin}.
  \item The analogue of Step 2 in the proof of Theorem~\ref{T:poin} follows using Lemma~\ref{L:forw_ineq} (as opposed to 
   Lemma~\ref{L:back_ineq}).
\end{itemize}

Now, we shall see how Corollary~\ref{C:poin_for_DS} answers Question~\ref{Q:analogue}. This requires
associating to the data given in Theorem~\ref{T:poin_rat} a holomorphic correspondence on $\sph$
to which Corollary~\ref{C:poin_for_DS} is applicable.
For $\mathcal G=\{f_1, \dots, f_N\}$ as in Theorem~\ref{T:poin_rat}, this correspondence is as follows:
\begin{equation}\label{E:semi_corres}
  \Gamma_{\mathcal G}:= \sum_{1 \leq i \leq N} {\rm{graph}} (f_i).
\end{equation}
The idea of studying the dynamics of a finitely generated rational semigroup through the correspondence
$\Gamma_{\mathcal G}$ was introduced by Bharali--Sridharan in \cite{BhaSri:hcrfgrs17}.

\begin{proof}[The proof of Theorem~\ref{T:poin_rat}]
By hypothesis, $\deg(f_i) \geq 2$ for each $i$.
Consider the holomorphic correspondence $\Gamma_{\mathcal G}$ and note that
$d_t= \deg(f_1) + \dots + \deg(f_N)$ and $d_0= N$. Thus the hypothesis of 
Corollary~\ref{C:poin_for_DS}\,---\,taking $\omega$ to be the (normalized) Fubini--Study form on $\sph$\,---\,is satisfied.
We shall denote $F_{\Gamma_{\mathcal G}}$ simply as $\f_{\mathcal G}$ and adopt the notational
conveniences noted in Section~\ref{S:intro}. Since the hypothesis of Corollary~\ref{C:poin_for_DS} is satisfied,
we get the Dinh--Sibony measure associated with 
$\f_{\mathcal G}$, which we denote by $\mu_{\mathcal G}$. This is same measure as constructed by
Boyd (given by \eqref{E:boyd}) for the semigroup $(S, \mathcal G=\{ f_1, \dots, f_N\})$\,---\,see Section~\ref{S:complex}. 
For any $x \in \sph$ we have
${F_{\mathcal G}}^n(x)= \{g(x): l(g)= n\}$.
Let $B$ be a Borel subset of $\J(S)$
such that $\mu_{\mathcal G}(B)>0$. Then, by Corollary~\ref{C:poin_for_DS},
for almost every $x \in B$, there exists an increasing sequence $\{n'_i\}$ of positive integers 
and a sequence of words $\{e_{i}\}$ (depending on $x$) composed of $f_1,\dots, f_N$,
such that $e_{i}(x) \in B$, where $l(e_i)=n'_i$. Note that there need not exist,
for any $i \in \Z_+$, a word $h_i$ such that $e_{i+1}=h_i \circ e_i$.
\smallskip

Now, choose $f_{i_1} \in \{f_1, \dots, f_N\}$ such that infinitely many terms in the sequence of words $\{e_{i}\}$
start with $f_{i_1}$, i.e., $e= h \circ f_{i_1}$ for some word $h$.
Further, choose $f_{i_2} \in \{f_1, \dots, f_N\}$ such that infinitely many words 
in $\{e_{i}\}$ start with $f_{i_2} \circ f_{i_1}$. Inductively, we similarly choose  
$f_{i_j}$ having chosen $f_{i_1}, \dots, f_{i_{j-1}}$. We are interested in the sequence of words
$\{f_{i_j} \circ \dots \circ f_{i_1}\}_{j \in \Z_+}$. 
By the choice of $f_{i_j}$ for each $j \in \Z_+$, it follows that the sequences of words $\{e_{i}\}$ 
and $\{f_{i_j} \circ \dots \circ f_{i_1}\}$ share infinitely many terms.
Let $\{g_{i}\}$ be a subsequence of both sequences
$\{e_{i}\}$ and $\{f_{i_j} \circ \dots \circ f_{i_1}\}$. As a subsequence of $\{e_{i}\}$, we have,
for each $i \in \Z_+$, $g_{i}(x) \in B$ and $l(g_i)=n_i$ for some increasing sequence $\{n_i\}$ 
of positive integers. As a subsequence of $\{f_{i_j} \circ \dots \circ f_{i_1}\}$, we have
$g_{i+1}=h_i \circ g_i$ for some word $h_i$ for each $i \in \Z_+$; obviously, $l(h_i)=n_{i+1}-n_i$.
\end{proof}

\begin{remark}\label{Re:borel_julia}
We must note here that the focus on $B \subseteq \J(S)$ in Theorem~\ref{T:poin_rat} is meant to
make the latter more informative, taking advantage of \cite[Theorem~1]{boyd:imfgrs99}.
In the \textbf{proof} of Theorem~\ref{T:poin_rat}, $\J(S)$ plays no key role. This suggests that
the latter theorem is generalizable to higher dimensions.
\end{remark}

Given a set of non-constant holomorphic self-maps $\{ f_1, \dots, f_N\}=:\mathcal G$ of $\projspace$, we can
define the holomorphic
correspondence $\Gamma_{\mathcal G}$ in analogy with \eqref{E:semi_corres}.

\begin{theorem}\label{T:coro_semi}
Consider the semigroup generated by 
$f_1, \dots, f_N$, non-constant holomorphic self-maps of $\projspace$, for some $k\in \Z_+$,
with at least one map of (algebraic) degree at least 2.
Denote by $\mu_{\mathcal G}$ the Dinh--Sibony measure associated with the holomorphic correspondence 
$\Gamma_{\mathcal G}$,
where $\mathcal G= \{ f_1, \dots, f_N\}$.
Let $B$ be a Borel subset of $\projspace$ such that $\mu_{\mathcal G}(B) >0$. Then 
for $\mu_{\mathcal G}$-almost every $x \in B$, there exists an increasing sequence $\{n_i\}$ of positive integers
and words $g_1, g_2, g_3,\dots$ composed of $f_1,\dots, f_N$ such that
$l(g_i)=n_i$,\; $g_{i+1}=h_i\circ g_i$ for some word $h_i$ with $l(h_i)=n_{i+1}-n_i$,
and such that $g_{i}(x)\in B$ for each $i \in \Z_+$.
\end{theorem}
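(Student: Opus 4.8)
The plan is to mimic the proof of Theorem~\ref{T:poin_rat}; the one point that genuinely requires the passage to dimension $k$ is the verification that $\Gamma_{\mathcal G}$ meets the hypothesis $d_{k-1}<d_t$ of Corollary~\ref{C:poin_for_DS}. First I would record basic facts about $\Gamma_{\mathcal G}:=\sum_{1\leq i\leq N}\mathrm{graph}(f_i)$. Write $d_i:=\deg(f_i)\geq 1$. Each $f_i$, being a non-constant everywhere-defined endomorphism of $\projspace$, satisfies $f_i^*\mathcal O(1)=\mathcal O(d_i)$; as $\mathcal O(d_i)$ is ample, $f_i$ is a finite morphism, so $\Gamma_{\mathcal G}$ is a \emph{holomorphic} correspondence (the fibres of $\pi_1$ and of $\pi_2$ over each point meet each $\mathrm{graph}(f_i)$ in a finite set). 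Since the $f_i$ are morphisms, one checks that $\mathrm{graph}(f_i)\circ\mathrm{graph}(f_j)=\mathrm{graph}(f_i\circ f_j)$ with multiplicity one, whence $\Gamma_{\mathcal G}^n=\sum_{l(g)=n}\mathrm{graph}(g)$ and $\f_{\mathcal G}^{\,n}(x)=\{g(x):l(g)=n\}$ for \emph{every} $x\in\projspace$, where $\f_{\mathcal G}:=F_{\Gamma_{\mathcal G}}$.

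Next I would compute the relevant dynamical degrees. Taking $\omega$ to be the Fubini--Study form normalized so that $\int_{\projspace}\omega^k=1$, and using that $g^*\omega$ is cohomologous to $\deg(g)\,\omega$ together with the multiplicativity $\deg(g)=\prod_j d_{i_j}$ of the algebraic degree, we get $d_t=\sum_{i=1}^N d_i^{\,k}$ and, since the pull-back by $\Gamma_{\mathcal G}^n$ acts on forms as $\sum_{l(g)=n}g^*$,
\[
  \int_{\projspace}(\Gamma_{\mathcal G}^n)^*\omega^{k-1}\wedge\omega=\sum_{l(g)=n}\deg(g)^{k-1}=\Big(\sum_{i=1}^N d_i^{\,k-1}\Big)^{n},
\]
so that $d_{k-1}=\sum_{i=1}^N d_i^{\,k-1}$. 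As $d_i^{\,k-1}\leq d_i^{\,k}$ for all $i$, with strict inequality for any $i$ with $d_i\geq 2$ (one such $i$ exists by hypothesis), we conclude $d_{k-1}<d_t$. By Result~\ref{R:DS} the Dinh--Sibony measure $\mu_{\mathcal G}$ of $\f_{\mathcal G}$ exists and puts no mass on pluripolar sets, and Corollary~\ref{C:poin_for_DS} applies: for $B$ as in the hypothesis, $\mu_{\mathcal G}$-almost every point of $B$ is infinitely recurrent in $B$. (For $k=1$ this is the content of Theorem~\ref{T:poin_rat} up to the role of $\J(S)$; cf.\ Remark~\ref{Re:borel_julia}.)

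It remains to carry out the combinatorial extraction, exactly as in the proof of Theorem~\ref{T:poin_rat}. For $\mu_{\mathcal G}$-almost every $x\in B$ there is an increasing sequence $\{n'_i\}$ with $\f_{\mathcal G}^{\,n'_i}(x)\cap B\neq\emptyset$; by the description of $\f_{\mathcal G}^{\,n}(x)$ above this furnishes words $e_i$ with $l(e_i)=n'_i$ and $e_i(x)\in B$. These need not be nested, so I would pass to a nested subsequence by a pigeonhole argument: choose $f_{i_1}$ so that infinitely many $e_i$ are of the form $h\circ f_{i_1}$, then $f_{i_2}$ so that infinitely many are of the form $h\circ f_{i_2}\circ f_{i_1}$, and inductively $f_{i_j}$; the words $f_{i_j}\circ\dots\circ f_{i_1}$, $j\geq 1$, then coincide with infinitely many of the $e_i$, and a common subsequence $\{g_i\}$ satisfies $g_i(x)\in B$, $l(g_i)=n_i$ for an increasing $\{n_i\}$, and $g_{i+1}=h_i\circ g_i$ with $l(h_i)=n_{i+1}-n_i$. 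This is precisely the claimed statement.

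The only step that needs real care is the dynamical-degree computation of the second paragraph: one must justify that $(\Gamma_{\mathcal G}^n)^*$ acting on smooth forms decomposes as the sum of $g^*$ over length-$n$ words, so that the cohomological identity $g^*\omega\sim\deg(g)\,\omega$ can be applied term by term; this rests on the functoriality of the pull-back under composition described in Section~\ref{SS:calculus}. Everything else is a transcription of arguments already established for Corollary~\ref{C:poin_for_DS} and Theorem~\ref{T:poin_rat}.
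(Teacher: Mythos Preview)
Your proposal is correct and follows essentially the same route as the paper: verify $d_{k-1}=\sum_i d_i^{\,k-1}<\sum_i d_i^{\,k}=d_t$ so that Corollary~\ref{C:poin_for_DS} applies to $\Gamma_{\mathcal G}$, then repeat verbatim the pigeonhole extraction from the proof of Theorem~\ref{T:poin_rat}. The only difference is cosmetic: the paper outsources the dynamical-degree computation to \cite{BhaSri:ehcecrs20} whereas you carry it out directly, which is fine since your cohomological argument (using $g^*\omega\sim\deg(g)\,\omega$ and that $(\Gamma_{\mathcal G}^n)^*$ on smooth forms is $\sum_{l(g)=n}g^*$) is correct.
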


We do not know whether $\supp(\mu_{\mathcal G})$ is a subset of the Julia set of the above semigroup 
when $k \geq 2$. It is well known that the topological degree of each $f_i$ equals ${\deg(f_i)}^k$
(see, for instance, \cite[Section~4]{RussShiff:vdsrmcd97}), where $\deg(f_i)$ denotes the algebraic degree.
Thus, for $\Gamma_{\mathcal G}$, $d_t=\deg(f_1)^k + \dots + \deg(f_N)^k$. It turns out that 
$d_{k-1}=\deg(f_1)^{k-1} + \dots + \deg(f_N)^{k-1} < d_t$
(see \cite[Section~4]{BhaSri:ehcecrs20} for an easy computation). 
With this observation, and given Remark~\ref{Re:borel_julia}, the proof of Theorem~\ref{T:coro_semi}
is same as the proof of Theorem~\ref{T:poin_rat}.
\medskip

\section{The proof of Theorem~\ref{T:recu}}\label{S:recu}

In the proof of Theorem~\ref{T:recu}, we need continuity-like properties of the relation associated with a correspondence.
Thus, before beginning the proof, we recall some definitions and results about relations.
A \emph{relation} on a set $X$ is a subset of $X \times X$.
Let $R$ be a relation on $X$ and let $A \subseteq X$. Then we define
\[
  R(A):= \{ y \in X : \exists x \in A {\rm{ \ satisfying \ }} (x,y) \in R\}.
\]  
It is easy to see that $R(A)= \pi_2(\pi_1^{-1}(A) \cap R)$, where $\pi_s$'s denote the projections on respective coordinates.
A relation $R$ on a compact Hausdorff space $X$ is called \emph{closed} if $R$ is a closed subset of
$X \times X$. We are now ready to state a result related to the continuity-like properties of a closed relation: 

\begin{result}[Theorem 2.3 of \cite{McGehee:acrchs92}]\label{R:relation}
Let $R$ be a closed relation on a compact Hausdorff space $X$. If $K$ is a compact subset of $X$ and if $U$ is an 
open set containing $R(K)$ then there exists an open set $V$ containing $K$ such that
$R(V) \subseteq U$. 
\end{result}

Let us now return to the case of our interest: let $X$ be a compact manifold of dimension $k$ and
$\f$ be the meromorphic correspondence induced by a holomorphic $k$-chain $\Gamma$ on $X$. Note that
$|\Gamma|$ is a closed subset of $X \times X$. Thus, if $K$ and $U$ are as in the above result then there is
an open set $V$ containing $K$ such that $\f(V) \subseteq U$. 

\begin{proof}[The proof of Theorem~\ref{T:recu}]
We begin with the proof of part~\ref{I:forward}.
Let $x \in \supp(\mu)$. Assume for a contradiction that $\f(x) \cap \supp(\mu)=\emptyset$.
By part~\ref{I:compact} of
Lemma~\ref{L:proper}, $\f(x)$ is a compact subset of $X$. Since $\supp(\mu)$ is a closed set,
there is an open set $U$ containing $\f(x)$ such that
\[
  U \cap \supp(\mu)= \emptyset.
\]
Thus $\mu(U)=0$. Owing to Result~\ref{R:relation}, we get an open set $V$ containing $x$ satisfying $\f(V) \subseteq U$.
Thus we have $\mu(\f(V))=0$. By Lemma~\ref{L:forw_ineq}, it now follows that $\mu(V)=0$. This contradicts the assumption
that $x \in \supp(\mu)$. Thus $\f(x) \cap \supp(\mu) \neq \emptyset$.
\smallskip

We now prove part~\ref{I:backward}.
Let $y \in \supp(\mu) \setminus \ind(\f)$. Assume that there exists $x \in \f^{\dagger}(y)$
such that $x \notin \supp(\mu)$. Thus there exists an open set $V$ containing $x$ such that
$\mu(V)=0$. 
Note that $\pi_1^{-1}(V) \cap |\Gamma|$ is an open subset of $|\Gamma|$ and $(x,y) \in \pi_1^{-1}(V) \cap |\Gamma|$.
Also note that since $y \notin \ind(\f)$, $(x,y) \notin \pi_2^{-1}(\ind(\f)) \cap |\Gamma|$. We shall now invoke 
Result~\ref{R:OMT}: take $X_1:= (X \times X) \setminus \pi_2^{-1} (\ind(\f))$, 
$X_2:=X$, $\mathcal A:= |\Gamma| \setminus \pi_2^{-1} (\ind(\f))$ and $f:= \pi_2$.
By the definition of $\ind(\f)$, for every $z \in \mathcal A$,  ${\rm{dim}}_z  {(f|_\mathcal A)}^{-1}  f|_\mathcal A (z)=0$.
Since $\Gamma$ is of pure dimension $k$, we have ${\rm{rank}}_z f|_\mathcal A=k={\rm{dim}}(X)$ for every $z \in \mathcal A$.
Thus $\pi_2|_{|\Gamma| \setminus \pi_2^{-1} (\ind(\f))}$ is an open mapping. It now follows easily that
$\f(V)= \pi_2 (\pi_1^{-1}(V) \cap |\Gamma|)$ contains $y$ as an interior point.
By Lemma~\ref{L:forw_ineq},
\[
  \frac{1}{d_t}\mu(\f(V))  \leq \mu(V).
\]
Thus, since $\mu(V)=0$, $\mu(\f(V)) =0$. As $y$ is an interior point of $\f(V)$, we get that
$y \notin \supp(\mu)$, resulting in a contradiction. Thus $\f^{\dagger}(y) \subseteq \supp(\mu)$.
What remains to be shown in part~\ref{I:backward} follows along the lines of the argument of part~\ref{I:forward},
using Lemma~\ref{L:back_ineq} instead of Lemma~\ref{L:forw_ineq}.
\smallskip

From the previous paragraphs, we already have
\[
  \underbrace{\f\big( \dots \f(\f(x)) \dots \big)}_{n-\rm{times}} \bigcap \supp(\mu) \neq \emptyset \ {\rm{and}} \ 
  \underbrace{\f^{\dagger}\big( \dots \f^{\dagger}(\f^{\dagger}(y)) \dots \big)}_{n-\rm{times}} 
  \bigcap \supp(\mu) \neq \emptyset
\]
for all $x, y \in \supp(\mu)$ and for all $n \in \Z_+$.
But this is not enough to conclude the proof of Theorem~\ref{T:recu}\,---\,see Remark~\ref{Re:compo}.
To remedy this, we first consider an iterate of $\f$ and then invoke the argument as in 
part~\ref{I:forward} and part~\ref{I:backward} above.
Since $\mu$ is an $\f^*$-invariant measure and does not put any mass on pluripolar sets, 
owing to the discussion in the last paragraph of Section~\ref{SS:calculus}, 
$ (\f^{n+1})^*\mu= \f^*(\f^{n})^*\mu$ for every $n \in \Z_+$. Inductively, it follows that
$\mu$ is an $(\f^n)^*$-invariant measure for every $n \in \Z_+$, i.e.,
\[
  (\f^n)^*\mu=(d_t)^n \mu \quad \forall n \in \Z_+.
\]
Consequently, we get analogous inequalities as in Lemma~\ref{L:back_ineq} and Lemma~\ref{L:forw_ineq}.
Now the remainder of the proof proceeds along the lines of the proof of part~\ref{I:forward} 
and part~\ref{I:backward} above.
\end{proof}

Recall that a set $A$ is nearly backward invariant if
$\f^\dagger(A \setminus \cup_{n=1}^{\infty} \ind(\f^n)) \subseteq A$. We now give

\begin{proof}[The proof of Corollary~\ref{C:recu}]
It follows from part~\ref{I:backward} of Theorem~\ref{T:recu} that $\supp(\mu_\f)$ satisfies
$\f^\dagger( \supp(\mu_\f)\setminus \cup_{n=1}^{\infty} \ind(\f^n)) \subseteq \supp(\mu_\f)$.
Recall that $\mu_{\f}$ puts zero mass on pluripolar sets.
This implies that $\supp(\mu_{\f})$ is a non-pluripolar set.
Since $\supp(\mu_{\f})$ is a closed set, we get $\supp(\mu_\f) \in \mathscr S$.
Now, consider $C \in \mathscr S$.
Since $C$ is a closed non-pluripolar set and $\cup_{n=1}^{\infty} \ind(\f^n)$ is pluripolar,
there exists $a \in C\setminus \cup_{n=1}^{\infty} \ind(\f^n)$ (see Section~\ref{S:complex}) such that
\[
  d_t^{-n} {(\f^n)}^* {\delta}_a\to \mu_{\f}
\] 
as $n \to \infty$.
From this it is easy to see, owing to $C$ being nearly backward invariant and
closed, that $\supp(\mu_{\f}) \subseteq C$.
Since $C \in \mathscr S$ is arbitrary, we conclude that
$\supp(\mu_\f)$ is the smallest closed non-pluripolar nearly backward invariant subset of $X$.
\end{proof}

\section*{Acknowledgments}
\noindent{I would like to thank my thesis adviser, Prof. Gautam Bharali, 
for several fruitful discussions
and for his support
during the course of this work.
This work is supported by a scholarship from the National Board for Higher Mathematics
(Ref. No. 2/39(2)/2016/NBHM/R\&D-II/11411) and by a UGC CAS-II grant (Grant No. F.510/25/CAS-II/2018(SAP-I)).}

\end{document}